\numberwithin{equation}{section}
\newcommand{\E}{\mathbb{E}}
\newcommand{\C}{\mathbb{C}}
\newcommand{\R}{\mathbb{R}}
\newcommand{\Tr}{\mathrm{Tr}}
\newcommand{\tr}{\mathrm{tr}}
\renewcommand{\d}{\partial}
\renewcommand\Im{\operatorname{Im}}
\renewcommand\P{\mathbb{P}}
\theoremstyle{plain}
  \newtheorem{theorem}{Theorem}[section]
  \newtheorem{proposition}[theorem]{Proposition}
  \newtheorem{lemma}[theorem]{Lemma}
  \newtheorem{corollary}[theorem]{Corollary}
\theoremstyle{definition}
  \newtheorem{remark}[theorem]{Remark}
\begin{document}

\title{Eigenvalues of block structured asymmetric random matrices}

\author[J. Aljadeff]{Johnatan Aljadeff}
\address{Department of Physics, UCSD; Computational Neurobiology Laboratory, The Salk Institute for Biological Studies}
\email{aljadeff@ucsd.edu}

\author[D. Renfrew]{David Renfrew}
\thanks{D. Renfrew is partly supported by NSF grant DMS-0838680}
\address{Department of Mathematics, UCLA  }
\email{dtrenfrew@math.ucla.edu}

\author[M. Stern]{Merav Stern}
\address{The Edmond and Lily Safra Center for Brain Sciences, Hebrew University; Department of Neuroscience, Columbia University}
\email{merav.stern@mail.huji.ac.il}

\begin{abstract}
We study the spectrum of an asymmetric random matrix with block structured variances. The rows and columns of the random square matrix are divided into $D$ partitions with arbitrary size (linear in $N$). The parameters of the model are the variances of elements in each block, summarized in $g\in\mathbb{R}^{D\times D}_+$. Using the Hermitization approach and by studying the matrix-valued Stieltjes transform we show that these matrices have a circularly symmetric spectrum, we give an explicit formula for their spectral radius and a set of implicit equations for the full density function. We discuss applications of this model to neural networks.
\end{abstract}

\maketitle

\section{Introduction}

Non-Hermitian random matrices have proven to be a useful theoretical tool to study, for example, physical and biological systems \cite{Sompolinsky1988,Stephanov1996}. The spectrum of different matrix models can be used to make statements and predictions for the behavior of those systems. There is often a constant tension between having to accurately represent the system of interest and wanting the matrix model to be tractable. In recent years this tension served as motivation to study the spectrum of generalized random matrix models that include structure \cite{Rajan2006,Wei2012,Ahmadian2013}.

Along these lines, recently we defined a square non-Hermitian random model by partitioning the matrix into a finite number of blocks and letting the variance of elements in each block be an independent parameter, see section \ref{model} for a formal definition. We will refer to these here as the heterogeneous model, in contrast to the homogeneous model where all the elements are drawn from the same distribution (giving Girko's circular law). The heterogeneous random matrix can be thought to represent the connectivity in a neural network with multiple cell-types, where the connectivity statistics are cell-type dependent \cite{Aljadeff2014}.

Previously, by using a mean-field approach to study the heterogeneous network, we derived a formula for an effective parameter axis along which a critical point is located. As in the homogeneous network \cite{Sompolinsky1988}, at this critical point the network undergoes a phase transition and its dynamics transform from having a single stable fixed point to chaos. This parameter was identified as the spectral radius of the heterogeneous matrix. Thus, using non-rigorous methods common in the physics literature, we obtained a formula for the spectral radius. This approach did not provide any information about the density.

Surprisingly, unlike in a matrix model where the variance depends only on the row or column index \cite{Rajan2006}, the spectral radius of the block-structured model can be smaller, equal to, or larger than the expectation value of its Hilbert-Schmidt norm. In the context of the application to neural networks this means that a particular block-structured organization of the connections in the network can dramatically affect the global dynamics and the computational capacity of the network \cite{Aljadeff2014}.

Random matrices with block structure have been studied in several different contexts. In the Hermitian case, block random  matrices have been used to study MIMO communication channels, see for example \cite{FOBS}. In the non-Hermitian case, block random matrices with each block having a possibly different non-zero mean, but each entry of matrix having the same variance, was considered in \cite{F}. In \cite{NO}, Girko's circular law was proved for block random matrices with dependencies between blocks but each entry having the same variance.

Here, using the Hermitization approach and by studying the matrix-valued Stieltjes transform, we prove that the support of the spectral density is given by the formula in \cite{Aljadeff2014}, and find a set of implicit equations for the density that can be solved numerically. For certain low dimensional parameterizations of the model it is possible to use these equations to approximate the parameter dependence of interesting quantities related to the spectrum, such as the mass on a given annulus.

It is possible to compute the spectrum of the matrix studied here when a finite rank perturbation is added. Knowledge of the spectrum is often necessary but not sufficient to understand the physical model's behavior. In the example of neural networks with cell-type-dependent connectivity statistics that motivated this work, adding a finite rank perturbation allows treatment of matrices where all the elements in each column have the same sign. These are useful in modeling networks that obey Dale's principle - stating that real neurons are either excitatory or inhibitory. However, the mean-field characterization of the dynamics in this model remains a subject for future research.

\section{Main result}
\label{model}

We now introduce our model and main result. Let $J_N^0$ be an $N \times N$ matrix with iid random entries with zero mean, variance $1/N$, and finite fourth moment. Let $g$ be a $D \times D$ matrix with real, positive entries. Let $\alpha$ be a $D$ dimensional vector such that
\[ \alpha_i > 0, \quad\sum_{i=1}^D \alpha_i =1\]
and let \[c_i = \left\{ c \left| \frac{i}{N} \in \left( \sum_{d=1}^{c-1} \alpha_d , \sum_{d=1}^{c} \alpha_d \right]\right. \right\}  . \]
Let $X_N$ be an $N\times N$ random matrix whose $i,j$ entry is
\[ X_{ij} := g_{c_i c_j} J_{ij}^0 .\]
We denote by $X^{cd}$, for $1\leq c,d \leq D$, the $\lfloor \alpha_c N \rfloor \times \lfloor \alpha_d N \rfloor$ matrix, which is the $cd^{th}$ block of $X_N$. Generally, superscripts will refer to $O(D)$ quantities and subscripts to $O(N)$ quantities.

Our main result concerns the eigenvalues of $X_N$ in the limit $N \to \infty$ with $D$ fixed. We also fix $\alpha$ and $g$. Let $\lambda_1, \ldots, \lambda_N$ be the eigenvalues of $X_N$ and let $\mu_N = \frac{1}{N} \sum_{i=1}^{N} \delta_{\lambda_i}$ be the empirical spectral measure of $X_N$.

Before stating our main result, we recall that $\d_z = \frac{1}{2} (\d_x -\sqrt{-1} \d_y)$. Let $G$ be the $D \times D$ matrix with entries $G_{cd} = \alpha_c g^2_{cd}$ and $\widehat G_{cd} = \alpha_c g^2_{dc}$. The matrices $G$ and $\widehat G$ have positive entries, thus by the Perron-Frobenius Theorem the largest eigenvalue in magnitude of each matrix is a positive real number. Furthermore, the eigenvector associated with this eigenvalue, called the Perron-Frobenius eigenvector, has strictly positive entries.

We denote by $\rho(G)$ the spectral radius of $G$. In fact, this is also the spectral radius of $\widehat{G}$. This can be seen by letting $\alpha$ be the diagonal matrix whose $c^{th}$ entry is $\alpha_c$ and $g^{(2)}$ be the matrix whose $cd^{th}$ entry is $g^2_{cd}$ then we have
\[ \rho(G) = \rho(\alpha g^{(2)}) = \rho( g^{(2)*} \alpha^*) = \rho(\alpha g^{(2)*}) = \rho(\widehat G) .\]

The limiting density of $\mu_N$, is characterized by the $D$ dimensional vectors, $a$, $\widehat a$, and $b$, which will be functions of $z$, a point in the complex plane, and $\eta$ a regularization parameter.

\begin{theorem}
The empirical spectral measure of $X_N$ converges almost surely to a deterministic measure $\mu$.
The density of $\mu$ is radially symmetric and its support has radius $\sqrt{\rho(G)}$. The density of $\mu$ at $|z| \leq \sqrt{\rho(G)}$ is
$ \lim_{\eta = \sqrt{-1} t \to 0} \frac{-1}{\pi}\sum_{c=1}^{D} \alpha_c \partial_z b_c(z,\eta)$
where $a$, $\widehat a$, and $b$ are the unique positive solutions to: \[b_c(z,\eta) = \frac{ z a_c(z,\eta)}{[Ga(z,\eta)]_c},\]
\[ a_{c}(z,\eta) = \frac{ [G  a(z,\eta) ]_c+ \eta}{|z|^2 - ( [\widehat G \widehat a(z,\eta)]_c + \eta)([G a(z,\eta) ]_c + \eta) },\]
\[ \widehat a_c(z,\eta)= \frac{ [\widehat G \widehat a(z,\eta)]_c + \eta}{ |z|^2 -  ([ \widehat G \widehat a(z,\eta)]_c + \eta)( [Ga(z,\eta)]_c + \eta)  } .\]
\end{theorem}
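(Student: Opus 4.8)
The plan is to follow the Hermitization scheme of Girko, made rigorous in the style of Tao–Vu and Bordenave–Chafaï. First I would introduce, for $z \in \mathbb{C}$ and a regularization $\eta = \sqrt{-1}\,t$ with $t>0$, the $2N \times 2N$ Hermitian matrix
\[
H_N(z) = \begin{pmatrix} 0 & X_N - z \\ X_N^* - \bar z & 0 \end{pmatrix},
\]
whose symmetrized singular-value distribution controls $\mu_N$ through the formula $\mu_N = \frac{1}{2\pi}\Delta_z \int_0^\infty \log x \, \nu_{N,z}(\ud x)$, where $\nu_{N,z}$ is the empirical measure of the singular values of $X_N - z$. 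The first step is therefore to prove convergence of $\nu_{N,z}$ for (Lebesgue-)almost every $z$, which amounts to computing the limit of the Stieltjes transform of $H_N(z)$ at spectral parameter $\eta$. Because the variance profile is block-constant, the natural object is not the scalar trace but the \emph{matrix-valued} Stieltjes transform: partitioning the resolvent $(H_N(z) - \eta)^{-1}$ according to the $D$ row/column classes and the $2\times 2$ chiral structure, one expects its normalized block traces to converge to the solution of a Dyson-type self-consistent equation.

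The core analytic step is to derive and solve that self-consistent equation. Using the Schur complement / resolvent expansion on $H_N(z) - \eta$, and concentration of quadratic forms (Hanson–Wright), each diagonal block of the resolvent satisfies, up to $o(1)$ errors, a fixed-point relation whose coefficients are exactly the matrices $G_{cd} = \alpha_c g_{cd}^2$ and $\widehat G_{cd} = \alpha_c g_{dc}^2$ — the $\alpha_c$ weights coming from the relative block sizes and the $g^2$ from the variances. Writing the chiral $2\times 2$ block entries as $m_c, \widehat m_c$ (diagonal part) and an off-diagonal part, exploiting the symmetry $z \mapsto$ rotation to reduce to the magnitude $|z|$, and introducing the substitutions $a_c = $ (a constant multiple of) the diagonal resolvent entry and $b_c$ the off-diagonal one, this system collapses precisely to the three displayed equations. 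I would then prove that for $t > 0$ this system has a \emph{unique} solution with positive $a, \widehat a$ (and hence well-defined $b$): existence via a Brouwer/Schauder fixed-point argument on the compact set cut out by the obvious a priori bounds, and uniqueness via a contraction or monotonicity (Earle–Hamilton / metric on the positive cone) argument, which is standard for Dyson equations with a definite sign structure.

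Next I would take $t \to 0$. The singular-value density of $X_N - z$ is recovered from $\tfrac{1}{N}\sum_c \alpha_c\, \mathrm{Im}\, b_c(z,\eta)$ type quantities; identifying the edge of the singular-value spectrum and showing the smallest singular value does not escape to $0$ too fast (the usual "least singular value" bound, e.g. via the Rudelson–Vershynin machinery adapted to the block variance profile, which here is bounded above and below so the entrywise variances are comparable to $1/N$) gives the uniform integrability of $\log x$ against $\nu_{N,z}$ needed to pass the logarithmic potential to the limit for a.e.\ $z$. Differentiating $\tfrac{1}{2\pi}\Delta_z \int \log x\, \nu_z(\ud x)$ in $z$ — equivalently applying $\d_z$ as in the statement — yields the claimed density $\tfrac{-1}{\pi}\sum_c \alpha_c\, \d_z b_c$. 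The support statement follows by locating where the solution $a(z,0)$ of the $\eta = 0$ system ceases to be the trivial branch: linearizing the fixed-point map near $a = 0$ shows the transition happens exactly when the Perron–Frobenius eigenvalue of the relevant linear operator — which is $\rho(G) / |z|^2$ — crosses $1$, i.e.\ at $|z| = \sqrt{\rho(G)}$; radial symmetry is immediate since the whole system depends on $z$ only through $|z|$. Finally, almost-sure (rather than in-probability) convergence of $\mu_N$ follows from a standard Borel–Cantelli / concentration argument once the a.e.-$z$ convergence and the least-singular-value tail bound are in hand.

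The main obstacle I anticipate is the control of the smallest singular value of $X_N - z$ uniformly enough to justify interchanging $t \to 0$ with the limit $N \to \infty$ inside the logarithmic potential; the block variance structure means one cannot quote Girko's circular-law lemmas verbatim, though since the $g_{cd}$ are fixed positive constants the entry variances stay comparable to $1/N$ and the Rudelson–Vershynin / Tao–Vu estimates go through with the constants depending on $\min_{cd} g_{cd}$ and $\max_{cd} g_{cd}$. A secondary technical point is proving uniqueness of the positive solution to the self-consistent system and its continuity up to $\eta = 0$ on the closure of the support, which is where the Perron–Frobenius structure of $G$ and $\widehat G$ does the real work.
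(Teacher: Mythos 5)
Your outline follows the same route as the paper: Hermitization, a $2D\times 2D$ block matrix-valued Stieltjes transform whose block traces satisfy a Dyson-type self-consistent equation with coefficient matrices $G$ and $\widehat G$ (derived via Schur complements and concentration of quadratic forms), uniqueness of the solution with positive imaginary part (the paper quotes the operator-valued semicircular theory of Helton--Far--Speicher rather than running a Brouwer/contraction argument from scratch, a cosmetic difference), uniform integrability of $\log$ via a least singular value bound plus control of moderate singular values, and a Perron--Frobenius analysis giving radius $\sqrt{\rho(G)}$.

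Two of your sub-steps are lighter than what is actually needed. First, the least singular value: your claim that the Tao--Vu/Rudelson--Vershynin bounds ``go through with constants depending on $\min_{cd} g_{cd}$ and $\max_{cd} g_{cd}$'' misidentifies the obstacle. The difficulty is not the size of the variances (they are indeed comparable to $1/N$) but that the entries of $X_N-z$ are independent yet \emph{not identically distributed} across blocks, and Theorem \ref{TVt} is stated and proved for iid entries; comparability of variances does not by itself let you rerun that proof. The paper instead proves Corollary \ref{lsv2}: it isolates one iid block at a time, uses its independence from the remaining blocks, applies the iid theorem to the Schur complement $M + D - EA^{-1}B$, and then proceeds by induction over the blocks of $X_N$. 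You would need either this reduction or a genuinely non-iid least singular value theorem. Second, for the support: linearizing at $a=0$ correctly identifies the threshold $|z|^2=\rho(G)$ and essentially reproduces the paper's argument outside the disk (iterating the fixed-point map and using that $\|G^k\|/|z|^{2k}$ is small), but inside the disk the instability of the trivial branch alone does not show that the positive solutions $a(z,\sqrt{-1}t)$ stay bounded away from $0$ as $t\to 0$. The paper obtains this lower bound by comparing $h=a/\sqrt{-1}$ with suitably normalized Perron--Frobenius eigenvectors of $G$ at the index minimizing $h_c/v_c$ (see the proof of Theorem \ref{supp}); some argument of this kind is needed before you can conclude that the whole disk lies in the support.
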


The proof is divided into several steps. We characterize the limiting deterministic measure, $\mu$, in section \ref{MVST}. We use Lemma \ref{BClemma} to prove the convergence of $\mu_N \to \mu$ in sections \ref{deriv} and \ref{logi}. We show that the density of $\mu$ is radially symmetric with given spectral radius in section \ref{analysis}.

\begin{remark}
The assumption that the entries have a finite fourth moment is natural, under less assumptions one does not expect all of the eigenvalues of $X_N$ to stay within the support of $\mu$. Furthermore, repeating the lengthy computations of \cite[Section 6]{OR} shows that almost surely there are no eigenvalues of $X_N$ outside the support of $\mu$. Finally, the appendix of \cite{TV} discusses a weak invariance principle for the eigenvalues of non-iid random matrices. This result combined with the least singular value bound in Section \ref{logi} can be used to prove convergence in probability to the same limiting measure when the entries of the random matrix only have finite variance.
\end{remark}

\begin{remark}
Under certain restrictions, the matrix model we study here coincides with previously studied models. Rajan and Abbott \cite{Rajan2006} give an explicit formula for the spectrum in the case with $D=2$ and $g$ that depends on only one index (i.e. $g$ is equal to two copies of the same vector). Interestingly, one can show that when the entries of $g$ depend only on their column or row index, for any $D$, the spectral radius of $X_N$ is equal to the expectation value of its Hilbert-Schmidt norm: $\mathbb{E}\|X_N\|_{HS} = (\sum_{c,d=1}^D\alpha_c\alpha_d g_{cd}^2)^{\frac{1}{2}}$. This is in general not the case for our matrix model. When ${\rm rank}\{g\}=1$, the matrices studied by Wei and Ahmadian et al. \cite{Wei2012,Ahmadian2013} can be defined (under certain conditions) to coincide with our model.
\end{remark}

\begin{figure}[h]
\includegraphics[width=1\textwidth]{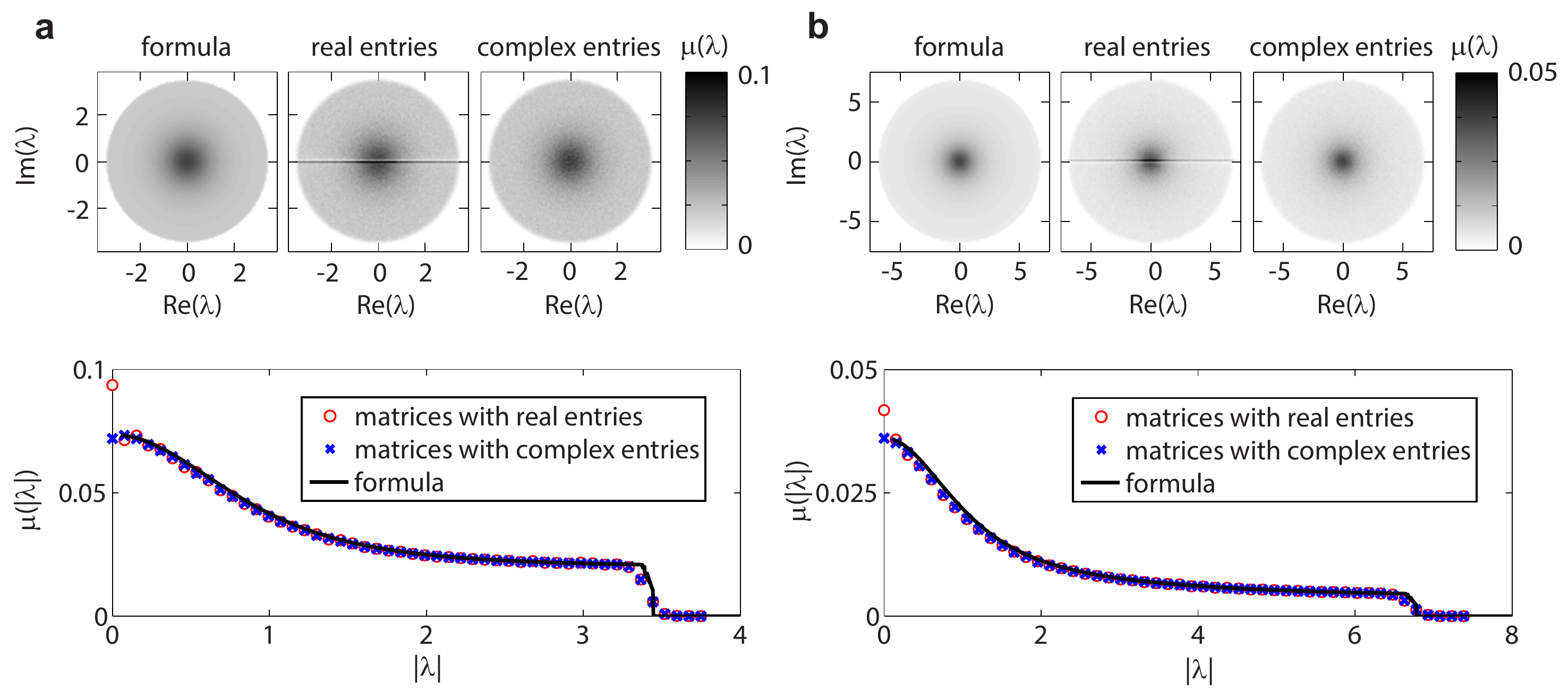}
\caption[]{Plot of example spectra in the complex plane (top) and of the radial part of the density (bottom). For each example we compared the numerical calculation from our formulae to direct computation of 1000 matrices with $N=1000$ and entries in $\mathbb{R}$ and $\mathbb{C}$. Note that for finite $N$ and matrices with real entries there is a large concentration of eigenvalues on the real axis. (a) An example with $D=2$, $\alpha=\left(0.3,0.7\right)$ and $g=\left(\begin{smallmatrix}1&2\\3&4\end{smallmatrix}\right)$. (b) An example with $D=3$, $\alpha=\left(0.25,0.30,0.45\right)$ and $g=\left(\begin{smallmatrix}1&2&3\\4&5&6\\7&8&9\end{smallmatrix}\right)$.\label{fig}}
\end{figure}

\section{Hermitization}

An empirical spectral measure, $\mu$, of a Hermitian matrix is often studied by considering its Stieltjes transform, defined by $  \int \frac{ d \mu(x)}{x - z }$ for $z$ with positive imaginary part. Since the eigenvalues of non-normal matrices can be complex, it is difficult to directly study the Stieltjes transform of the empirical spectral measure. Spectral instability of non-normal matrices introduces more difficulties in attempting to use standard hermitian techniques, see for example \cite[Section 11.1]{BS}. To circumvent these issues we follow the method of Hermitization pioneered by Girko, \cite{G}, and since refined by many authors, see for example \cite{B, BC, TV} and references within.

The Hermitization of $X_N$ is defined to be the $2N \times 2N$ matrix
\[ H_N := \begin{pmatrix} 0 & X_N \\ X_N^* & 0 \end{pmatrix} .\]
We also define the Hermitization of $X_N-z$ as
\[H_N(z) :=  \begin{pmatrix} 0 & X_N -z I_N \\ (X_N-z I_N)^* & 0 \end{pmatrix} \]
from which we define the resolvent
\begin{equation}
\label{defres}
R_N(q) := \begin{pmatrix}- \eta I_N & X_N -z I_N \\ (X_N-z I_N)^* &- \eta I_N \end{pmatrix}^{-1} = (H_N - q \otimes I_N)^{-1}
\end{equation}
where $q = \begin{pmatrix} \eta & z  \\ \overline z  & \eta \end{pmatrix}$.

Viewing $R_N(q)$ as an $2 \times 2$ block matrix with $N \times N$ sized matrix entries and taking the trace over each block leads to the $2 \times 2$ matrix-valued Stieltjes transform:
\[ \Gamma_N(q):= \begin{pmatrix}  a_N(q) & b_N(q) \\ c_N(q) & a_N(q) \end{pmatrix} :=(I_2 \otimes \tr_N) R_N(q)   \]
where $\tr_N:= \frac{1}{N} \Tr$ is the normalized trace of an $N \times N$ matrix. The 2 by 2 matrix $\Gamma_N(q)$ was introduced in the math literature in \cite{BC} and used in the physics literature previously.

We will often be interested in $\eta = \sqrt{-1} t$ for $t >0$, in which case $c_N = \overline{b_N}$. When we wish to emphasize the dependence on $z$ or $\eta$ we will replace $(q)$ with $(z,\eta)$ in the argument of $\Gamma_N$, $a_N$, $b_N$ or $c_N$.

Let $\nu_{z,N}$ the empirical spectral measure of $H_N(z)$. From $\nu_{z,N}$ the singular values of $X_N-z I_N$ can be recovered by noting that if $\sigma$ is a singular value of $X_N-z$ then $\pm \sigma$ is an eigenvalue of $H_N(z)$.
Since $R_N(z,\eta)$ is the resolvent of $H_N(z)$ at $\eta$, $a_N(z,\eta)$ is the Stieltjes transform of $\nu_{z,N}(x)$ at $\eta$.

Note that by direct calculation
\[ a_N(\eta,z) = \eta \tr_N(\eta^2 - (X_N-z) (X_N-z)^*)^{-1 }) .\]

In particular $a_N$ is purely imaginary and with positive imaginary part when $\eta = \sqrt{-1} t$ with $t>0$.

By the Stieltjes inversion formula, $\nu_{z,N}$ can be recovered from $a_N(q)$. Furthermore, $\mu_N$ can be recovered from $\nu_{z,N}$ by taking the Laplacian of the logarithmic potential, $U_N(z) :=  -\int_{\C} \log( s- z) d\mu_N(s)$, and using the following identities.
\begin{align*}
2 \pi \mu_N(z) &= \Delta \int_{\C} \log( s- z) d\mu_N(s) =  \Delta \frac{1}{N} \sum_{i=1}^N \log( \lambda_i -z) \\
&=  \Delta \frac{1}{N} \log( \det( X_N -z)) =  \Delta \int_{0}^{\infty} \log( x) d\nu_{z,N}(x)
\end{align*}

While $a_N(q)$ characterizes $\mu_N$, it can be difficult to compute its density from $\mu_N$. Fortunately, the density can also be recovered from $(X_N-z)^{-1}$, by noting that from Jacobi's formula $\d_{\overline z} U_N(z) = \frac{1}{2} \tr_N( (X_N-z)^*)^{-1} )$, and then applying $\d_z$ to $\tr_N( (X_N-z)^*)^{-1} )$. As noted above, the resolvent is not bounded uniformly, fortunately $\Gamma_N$ gives a regularization of the resolvent.

Once again by direct computation
\[ b_N(\eta,z) = -\tr_N( (X_N-z)(\eta^2 - (X_N-z) (X_N-z)^*)^{-1} ). \]
In the limit $\eta \to 0$ we recover the adjoint of the resolvent
\[ \lim_{\eta \to 0} b_N(\eta,z) =\tr_N( (X_N-z)^*)^{-1} ) .\]
The justification of this regularization in computing the limiting distribution is the content of the next lemma.

\begin{lemma}[Lemma 4.20, \cite{BC}]
\label{BClemma}
Let $(X_N)_{N\geq 1}$ be a sequence of random matrices. Assume for all $q \in \mathbb{H}_+$, there exists
\[ \Gamma(q) = \begin{pmatrix} a(q) & b(q) \\ c(q)  & a(q)   \end{pmatrix} \in  \mathbb{H}_+\]
such that for a.a. $z \in \C$, $\eta \in \C_+$, with $q = q(z,\eta)$,
\begin{enumerate}[(i)]
\item a.s. (respectively in probability) $\Gamma_{N}(q)$ converges to $\Gamma(q)$ as $N \to \infty$
\item a.s. (respectively in probability) $\log$ is uniformly integrable for $(\nu_{z,N})_{N\geq 1}$
\end{enumerate}
Then there exists a probability measure $\mu $ such that\\
\phantom{.}~ (j) a.s. (respectively in probability) $\mu_{N} \to \mu$ as $N \to \infty$\\
\phantom{.}~ (jj) a.s. (respectively in probability) $\mu = \frac{-1}{\pi} \lim_{q(z,\sqrt{-1} t): t \to 0} \d_z b(q)$.
\end{lemma}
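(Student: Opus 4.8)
I would follow Girko's Hermitization scheme in the refined form that also tracks the off-diagonal entry $b$ of $\Gamma$ — this is the argument of \cite{BC}. Fix $t>0$, write $\eta=\sqrt{-1}t$, work at a fixed generic $z\in\C$, and keep the qualifier ``a.s.\ (resp.\ in probability)'' implicit. The first step is to upgrade hypothesis (i) to weak convergence of the symmetrized singular value measures $\nu_{z,N}$ of $X_N-z$. As recalled in the text, $a_N(z,\cdot)$ is the Stieltjes transform of the probability measure $\nu_{z,N}$; since $\Gamma(q)\in\mathbb{H}_+$ forces $a(z,\cdot)$ to be a Nevanlinna function with $-\sqrt{-1}y\,a(z,\sqrt{-1}y)\to1$ as $y\to\infty$ (the normalization being inherited from the probability measures $\nu_{z,N}$), the pointwise limit $a(z,\cdot)$ is itself the Stieltjes transform of a symmetric probability measure $\nu_z$. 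Convergence of Stieltjes transforms on a set of positive measure in $\C_+$ extends to all of $\C_+$ by a normal-families/Vitali argument, and hence $\nu_{z,N}\to\nu_z$ weakly.

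Next I would pass to logarithmic potentials. For a.a.\ $z$ the matrix $X_N-z$ is a.s.\ invertible and $U_{\mu_N}(z):=-\int\log|w-z|\,\ud\mu_N(w)=-\tfrac1N\log|\det(X_N-z)|=-\int\log|x|\,\ud\nu_{z,N}(x)$. Here hypothesis (ii) enters: uniform integrability of $\log$ against $(\nu_{z,N})$, together with the weak convergence just obtained, gives $\int\log|x|\,\ud\nu_{z,N}(x)\to\int\log|x|\,\ud\nu_z(x)$, hence $U_{\mu_N}\to U:=-\int\log|x|\,\ud\nu_z(x)$ pointwise a.e., and — after upgrading to $L^1_{\mathrm{loc}}(\C)$ using the same integrability control, now made uniform in $z$ on compact sets — in the sense of distributions. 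Since $\mu_N=-\tfrac1{2\pi}\Delta U_{\mu_N}$ for every $N$, every vague subsequential limit $\mu$ of $(\mu_N)$ satisfies $\mu=-\tfrac1{2\pi}\Delta U$. To rule out escape of mass I would use that, as $|z|\to\infty$, $\nu_z$ concentrates near $\pm|z|$ (by the a priori bound on $\|X_N\|$), so $U(z)=-\log|z|+o(\log|z|)$, and a positive measure with a logarithmic potential of this form has total mass exactly $1$. Thus every subsequential vague limit of $(\mu_N)$ is a probability measure — so the family is tight, its vague limits are weak limits, and each equals $-\tfrac1{2\pi}\Delta U$ — hence it is unique; this proves (j).

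It then remains to express $\mu$ through $b$. A direct computation extending the one in the text gives, for $\eta=\sqrt{-1}t$, the identity $b_N(z,\eta)=2\,\d_{\overline z}U^{t}_{\mu_N}(z)$, where $U^{t}_{\mu_N}(z):=-\tfrac1{2N}\log\det\!\big((X_N-z)(X_N-z)^{*}+t^{2}\big)=-\tfrac12\int\log(x^{2}+t^{2})\,\ud\nu_{z,N}(x)$ is the regularized potential; consequently $\mu^{t}_N:=-\tfrac1\pi\,\d_z b_N(\cdot,\eta)=-\tfrac1{2\pi}\Delta U^{t}_{\mu_N}$ is a probability measure (a regularization of $\mu_N$: nonnegative because $U^{t}_{\mu_N}$ is superharmonic, of total mass $1$ from its logarithmic decay at infinity). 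Because $|b_N(z,\eta)|\le(2t)^{-1}$ uniformly in $N$ and $z$, hypothesis (i) and dominated convergence give $b_N(\cdot,\eta)\to b(\cdot,\eta)$ in $L^1_{\mathrm{loc}}$, so $\mu^{t}_N\to-\tfrac1\pi\,\d_z b(\cdot,\eta)$ as distributions; on the other hand $U^{t}_{\mu_N}\to U^{t}:=-\tfrac12\int\log(x^{2}+t^{2})\,\ud\nu_z(x)$ in $L^1_{\mathrm{loc}}$ (weak convergence of Step~1 plus hypothesis (ii)), so that same limit equals $-\tfrac1{2\pi}\Delta U^{t}$, again a probability measure. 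Finally $U^{t}\uparrow U$ as $t\downarrow0$, whence $-\tfrac1\pi\,\d_z b(\cdot,\eta)=-\tfrac1{2\pi}\Delta U^{t}\to-\tfrac1{2\pi}\Delta U=\mu$ as $t\downarrow0$; this is (jj).

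The step I expect to be the main obstacle is the passage from convergence of the (regularized) logarithmic potentials to genuine weak convergence of $\mu_N$: one must exclude escape of mass at infinity \emph{and} control the smallest singular values of $X_N-z$ so that $\int\log|x|\,\ud\nu_{z,N}(x)$ really converges — this is exactly the job of hypothesis (ii) — and then these controls must be made uniform in $z$ over compact sets to yield the $L^1_{\mathrm{loc}}$ statements used above. The rest is bookkeeping: the limits $N\to\infty$, $t\downarrow0$ and the operators $\d_z,\d_{\overline z},\Delta$ must be interchanged, which is done cleanly by routing everything through the regularized potentials $U^{t}_{\mu_N}$ and their distributional limits.
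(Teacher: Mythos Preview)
The paper does not prove this lemma; it is quoted verbatim as Lemma~4.20 of \cite{BC} and used as a black box, so there is no in-paper argument to compare against. Your sketch is essentially the Bordenave--Chafa\"i proof: pass from convergence of $a_N$ to weak convergence $\nu_{z,N}\to\nu_z$, combine with hypothesis~(ii) to get convergence of the logarithmic potentials $U_{\mu_N}(z)=-\int\log|x|\,\ud\nu_{z,N}(x)$, and then read off $\mu$ and the formula for the density via $\d_z b$ from the regularized potentials $U^{t}$.

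One point to flag: in your tightness step you invoke ``the a priori bound on $\|X_N\|$'' to argue that $\nu_z$ concentrates near $\pm|z|$ for large $|z|$. No such bound is among the hypotheses of the lemma, which is stated for an arbitrary sequence $(X_N)$. In \cite{BC} this step (and the companion passage from a.e.\ pointwise convergence of $U_{\mu_N}$ to $L^1_{\mathrm{loc}}$ convergence, which you also flag) is not done by controlling $\|X_N\|$ but via a separate potential-theoretic ``unicity'' lemma: if $(\mu_N)$ are probability measures on $\C$ and $U_{\mu_N}(z)$ converges for a.a.\ $z$, then the limit is automatically the logarithmic potential of a probability measure $\mu$ and $\mu_N\to\mu$ weakly. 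So your outline is correct in spirit, but the tightness/$L^1_{\mathrm{loc}}$ step should be justified by that lemma (or an equivalent potential-theoretic argument) rather than by an operator-norm bound that is not assumed.
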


For $\log$ to be uniformly integrable we require that 

\begin{equation}
\label{logieq}
\lim_{t \to \infty} \sup_{N\geq 1}  \int_{|\log(x)|>t} | \log(x)| d \nu_{z,N}(x)  =0.
\end{equation}

The remainder of this section and section \ref{deriv} are devoted defining $\Gamma(q)$ and showing that $\Gamma_N(q) \to \Gamma(q)$ almost surely. In section \ref{logi}, we show that $\log$ is almost surely uniformly integrable for $(\nu_{z,N})_{N\geq 1}$.

\subsection{Matrix-Valued Stieltjes transform}
\label{MVST}
Instead of directly working with the $2 \times 2$ matrix, $\Gamma_N(q)$, we use the block structure of $X_N$ and consider a $2D \times 2D$ matrix-valued Stieltjes transform, formed by taking partial traces over each block of the resolvent.
The resolvent $R_N$ is divided into $(2D)^2$ blocks using the partitions coming from $X_N$ in $\begin{pmatrix} 0 & X_N \\ X_N^* & 0 \end{pmatrix}$. Namely, the $cd^{th}$ block is of size $\lfloor \alpha_{c~ mod D} N \rfloor \times \lfloor \alpha_{d~ mod D} N \rfloor$.
We label the $cd^{th}$ block of $R^{cd}$, and its entries $R^{cd}_{ij}$ with $i,j$ running over the size of the block.

When possible we will omit the dependence of these matrices on $N$ and $q$.
Let $M_N(q)$ be a $2D\times 2D$ matrix with $c,d$ entry
\begin{equation}
\label{defMN}
M_N^{cd}(q) := \frac{1}{\alpha_c} \tr_N(R_N^{cd}(q)).
\end{equation}
From this matrix-valued Stieltjes transform the actual Stieltjes transform can be recovered by taking a weighted trace, $a_N(q) = \sum_{c=1}^D \alpha_c M_N^{cc}(q)$.

To describe the limiting Stieltjes transform, we use the theory of operator-valued semicircular elements, also used in, for example \cite{A,HFS,HT}. In section \ref{deriv}, we show that $M_N(q)$ approximately satisfies a self consistent equation, from which we can estimate the difference between $M_N(q)$ and the actual solution to this self consistent equation. The solution to the fixed point equation is the operator-valued Stieltjes transform of an operator-valued semicircular element. This semicircular element is determined by a completely positive map $\Sigma$. The operator-valued Stieltjes transform of this element is the solution to the equation
\begin{equation}
\label{Meq}
M(\tilde q) = - ( \tilde q+ \Sigma(M(\tilde q) ) )^{-1}
\end{equation}
with positive imaginary part for $\tilde q$ with positive imaginary part. Recall the  imaginary part of a matrix, $X$ is $\frac{1}{2 \sqrt{-1} } ( X - X^*)$. In \cite{HFS}, it is shown that there is a unique solution to \eqref{Meq} with this positivity condition. Furthermore, this solution can be found by iterating the map $W \to - (\tilde  q+ \Sigma(W)  )^{-1}$ and that the norm of the solution is bounded by $\| \Im(\tilde q)^{-1}\|$. When $\tilde q$ is taken to be $q \otimes I_D$, then $\Im(\tilde q) = \Im(\eta) I_{2 D}$. The uniqueness of solution and the bound on its norm will be used in the proof of Lemma \ref{expest}.

In our case, $M(\tilde q)$ is a $2D\times 2D$ matrix and $\Sigma$ is a linear operator on $2D \times 2D$ matrices, defined by

\[\Sigma(A)_{cd} = \delta_{cd} \sum_{e=1}^{D} \alpha_{c}  g_{ce}^2  A_{e+D,e+D}   \]
for $1\leq c,d \leq D$ and
\[\Sigma(A)_{cd} = \delta_{cd} \sum_{e=1}^{D} \alpha_{c}  g_{ec}^2  A_{e,e}  \]
for $D+1\leq c,d \leq 2D$ .
In the derivation of this equation we will see that $\Sigma(A)$ is a diagonal matrix as a result of the independence between matrix entries of $X_N$.

By iterating the map $W \to - ( q \otimes I_D + \Sigma(W)  )^{-1}$, with initial point $q \otimes I_D$, we see the solution will be of the form
\begin{equation}
\label{defM}
 M(q) =  \begin{pmatrix}  a_1 &  &   &b_1 &  &  \\
 & \ddots &  & & \ddots &  \\
 &  &  a_D & & & b_D \\
c_1 &  &   & \widehat a_{1} &  &  \\
 & \ddots &  & & \ddots &  \\
 &  &  c_D & & & \widehat a_{D}  \end{pmatrix}
 \end{equation}
 with the empty entries equal to zero.
Furthermore when $\eta=\sqrt{-1} t$, the diagonal entries will be purely imaginary, with positive imaginary part.

Inverting the matrix on the right side of \eqref{Meq} with our particular $\Sigma$ gives a system of equations for the entries of $M$ (written below). The first $D$ diagonal entries are acted on by the previously introduced matrix $G$, with entries $G_{cd} = \alpha_c g^2_{cd}$, and the remaining diagonal entries are acted on by $\widehat G$ which has entries $ \widehat G_{cd} = \alpha_c g^2_{dc}$.

\begin{equation}
\label{aeq}
 a_{c} = \frac{ [G  a ]_c+ \eta}{|z|^2 - ( [\widehat G \widehat a]_c + \eta) ([G a ]_c + \eta) }
 \end{equation}
\begin{equation}
\label{waeq}
 \widehat a_c= \frac{ [\widehat G \widehat a]_c + \eta}{ |z|^2 -  ( [\widehat G \widehat a]_c + \eta)( [Ga]_c + \eta)  }
 \end{equation}
 and
\begin{equation}
\label{beq} b_c = \frac{- z }{ |z|^2 - ( [\widehat G \widehat a]_c + \eta) ( [Ga]_c + \eta)  },
 \end{equation}
 for $1\leq c\leq D$.

Using Lemma \ref{BClemma} the limiting measure $\mu = \frac{-1}{\pi} \lim_{t \to 0}  \partial_z \sum_{c=1}^D \alpha_c b_c(z,\sqrt{-1} t) $.
If $\|Ga\| \to 0$ as $\eta \to 0$ then $b \to -1/\overline{z}$ whose derivative with respect to $z$ vanishes, otherwise $b_c \to  \frac{ -z a_c}{[Ga]_c}$.

\begin{remark}
In Fig. \ref{fig} we compare the full spectral density and the radial part computed numerically from the implicit equations (\ref{aeq}-\ref{waeq}) to direct diagonalization of instantiations with $D=2$ and $D=3$. The density is computed from the above equations by choosing a grid of $z$ values and, for each point, iterating the maps for $a, \widehat{a}$ until convergence. The solutions to these equations are substituted into \eqref{beq}, and finally the density is obtained by computing the gradient weighted by the appropriate model parameters.
\end{remark}

\section{Derivation of fixed point equation}
\label{deriv}

In this section we show $M_N(q)$ approximately solves equation \eqref{Meq} with the given $\Sigma$, from which the difference $\|M_N(q) - M(q)\|$ will be estimated.

We first introduce some notation.
Let $R_N^{(k)}(q)$ be the resolvent of $H_N(z)$ with the $k^{th}$ row and column of each block set to zero, let $R_N^{cd(k)}(q)$ be the $c,d$ block of this matrix,
and let $M_N^{cd(k)}(q) := \frac{1}{\alpha_c} \tr_N(R_N^{cd(k)}(q) )$.
Let $R_{N;11}(q)$ be the $2D \times 2D$ matrix with entries that are the $(1,1)$ entry of each block.
\[ R_{N;11} := \begin{pmatrix} R^{11}_{11} & R^{12}_{11} & \ldots & R^{2D,2D}_{11} \\ R^{21}_{11} & \ddots & & \vdots \\
\vdots &  &\ddots & \vdots \\ R_{11}^{2D,1} & \ldots &   \ldots & R_{11}^{2D,2D}       \end{pmatrix}  \]

We recall Schur's complement for computing entries of the inverse of a matrix.
Given a block matrix
\[ X = \begin{pmatrix} A & B \\ E & D \end{pmatrix} \]
then the entries of the upper left block of the inverse can be computed via the formula:
\[ \left(X^{-1} \right)_{11} = ( A - E D^{-1} B )^{-1}_{11} \]
or more generally, for a set $I$ such that $\{1 ,\ldots, N\} = I \cup I^c$
\[  \left(X^{-1} \right)_{I,I} = \left(X_{I,I} - X_{I,I^c} (X_{I^c,I^c})^{-1} X_{I^c,I} \right)^{-1} \]
We apply Schur's complement to $H_N - q \otimes I_N$ using the set $I$ formed by taking the index of the first entry of each block, leading to an expression for $R_{N;11}$.

The main estimate of this section is the contents of the next proposition. By Vitali's convergence theorem, it suffices to prove convergence of $\Gamma_N$ for $\eta = \sqrt{-1}t$, with $0 < t < T$, for some large $T$. By applying the Bai-Yin Theorem for the operator norm to each block, $\|X_N\| < T$ almost surely for $T$ sufficiently large (see for instance \cite[Chapter 5]{BS} or \cite[Section 2.3]{T}), so it suffices to prove converge for $z$ in a compact set.

\begin{proposition}
\label{mainest}
Let $T>0$ be sufficiently large and $|z| < T$, $\eta = \sqrt{-1} t $ with $0<|t| < T$. Then almost surely
$\| M_N(q) - M(q) \| =O(|\Im(\eta)|^{-5} N^{-1/4})$.
\end{proposition}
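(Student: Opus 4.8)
The plan is to follow the standard resolvent/Schur-complement concentration argument adapted to the block structure, obtaining a self-consistent equation for $M_N(q)$ up to a small error, and then invoking the stability (uniqueness and the norm bound $\le \|\Im(\tilde q)^{-1}\|$) of the fixed-point equation \eqref{Meq} from \cite{HFS} to convert an approximate solution into a genuine estimate on $\|M_N(q)-M(q)\|$. Concretely: writing $H_N - q\otimes I_N$ in the block form coming from $X_N$ and applying Schur's complement along the index set $I$ consisting of the first index of each of the $2D$ blocks, one gets an exact formula for $R_{N;11}$ as the inverse of a $2D\times 2D$ matrix $q\otimes I_D - \mathcal{K}_N$, where $\mathcal{K}_N$ is a quadratic form in the removed rows/columns against the reduced resolvent $R_N^{(1)}$. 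The first step is to show that, after averaging such Schur expansions over all $N$ choices of the removed index within each block (i.e., summing the rank-one perturbations), the resulting matrix $\mathcal{K}_N$ concentrates around $\Sigma(M_N(q))$: the diagonal blocks come from $\E[|X_{ij}|^2] = g^2_{c_i c_j}/N$ summed over the block, which reproduces the coefficients $\alpha_c g^2_{ce}$ in the definition of $\Sigma$, and the off-diagonal terms vanish by independence and mean zero.

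The concentration is quantified the usual way: the quadratic form $\sum_{j} \overline{X}_{kj} X_{kj'} [R_N^{(k)}]_{jj'}$ equals its conditional expectation $\tfrac{1}{N}\sum_j g^2_{c_k c_j}[R_N^{(k)}]_{jj}\delta_{kk'}$-type terms plus a fluctuation, which one bounds by a martingale/variance estimate. With only a finite fourth moment assumed, the sharp tool is a Burkholder-type or large-deviation inequality for quadratic forms (see \cite{BS,T}) that gives fluctuations of size $O(\|R_N^{(k)}\| \, N^{-1/2})$ with the appropriate powers of $\|\Im(\eta)^{-1}\|$ from bounding resolvent entries (each entry of $R_N$ and $R_N^{(k)}$ is $O(|\Im(\eta)|^{-1})$). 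Next, a standard rank-one (Woodbury) comparison shows $M_N^{cd(k)}(q) - M_N^{cd}(q) = O(|\Im(\eta)|^{-2} N^{-1})$, so $M_N^{(k)}$ may be replaced by $M_N$ in the self-consistent equation at negligible cost. Assembling these, one obtains
\[
M_N(q) = -\bigl(q\otimes I_D + \Sigma(M_N(q)) + E_N(q)\bigr)^{-1}
\]
with $\|E_N(q)\| = O(|\Im(\eta)|^{-c} N^{-1/4})$ almost surely (the $N^{-1/4}$, rather than $N^{-1/2}$, being the price of the finite-fourth-moment truncation, exactly as in \cite{BC,OR}; one truncates entries at $N^{1/4}$, controls the truncation error in operator norm by Bai-Yin-type bounds, and pushes the fluctuation estimates through).

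The final step is the stability argument: since $M(q) = -(q\otimes I_D + \Sigma(M(q)))^{-1}$ is the unique solution with positive imaginary part and the map $W \mapsto -(q\otimes I_D + \Sigma(W))^{-1}$ is a contraction in an appropriate metric (Hilbert metric on the positive cone, or a direct Lipschitz estimate) with constant controlled by $\|\Im(\eta)^{-1}\|$ and $\|\Sigma\|$, the perturbed identity above forces $\|M_N(q) - M(q)\| \le C(\|\Im(\eta)^{-1}\|)\,\|E_N(q)\|$; tracking the powers of $|\Im(\eta)|^{-1}$ through the resolvent bounds, the contraction rate, and the matrix-inversion perturbation bound $\|(A+E)^{-1}-A^{-1}\| \le \|A^{-1}\|^2\|E\|/(1-\|A^{-1}\|\|E\|)$ yields the claimed exponent $|\Im(\eta)|^{-5}$. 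The main obstacle, and the step requiring the most care, is making the finite-fourth-moment fluctuation bounds uniform enough in $z$ (over the compact set $|z|<T$) and in $t$ (over $0<|t|<T$) to get an almost-sure statement rather than merely in probability: this requires a net argument in $(z,t)$ together with the Lipschitz continuity of both $M_N(q)$ and $M(q)$ in $q$ away from the real axis, plus a Borel–Cantelli step using the polynomial-in-$N$ error and the sub-exponential tails coming from the truncation.
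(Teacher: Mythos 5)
Your overall skeleton matches the paper's: Schur complement at the first index of each block, identification of $\Sigma$ from the second moments, a rank-one/martingale bound on the error in the self-consistent equation, and then a stability step converting the approximate fixed point into a bound on $\|M_N(q)-M(q)\|$. However, the stability step, which is the only genuinely delicate part, is where your argument has a gap. You assert that $W \mapsto -(q\otimes I_D+\Sigma(W))^{-1}$ is a contraction ``with constant controlled by $\|\Im(\eta)^{-1}\|$ and $\|\Sigma\|$.'' The direct Lipschitz estimate for this map has constant of order $\|\Sigma\|\,|\Im(\eta)|^{-2}$, which exceeds $1$ precisely in the regime $\eta=\sqrt{-1}\,t\to 0$ where the proposition is needed; so the naive perturbation identity $\|(A+E)^{-1}-A^{-1}\|\le\|A^{-1}\|^2\|E\|/(1-\|A^{-1}\|\|E\|)$ applied to the fixed-point equation does not close. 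The Hilbert/Carath\'eodory-metric contraction underlying the uniqueness result of \cite{HFS} is qualitative; extracting from it a Lipschitz constant polynomial in $|\Im(\eta)|^{-1}$ is a substantial piece of work that you have not supplied. The paper avoids this entirely by a different device: it absorbs the error into the spectral parameter, setting $Z_N'=(q\otimes I_D+\Sigma(\E[M_N]))^{-1}Z_N(q\otimes I_D+\Sigma(\E[M_N])+Z_N)^{-1}$ and $q_N=q\otimes I_D+\Sigma(Z_N')$, observes that $\E[M_N(q)]-Z_N'$ solves \eqref{Meq} exactly at $q_N$ (whose imaginary part is still $\gtrsim \Im(\eta)/2$), and then uses only the \emph{uniqueness} of the solution with positive imaginary part and the a priori bound $\|M\|\le\|\Im(\tilde q)^{-1}\|$ to compare $M(q_N)$ with $M(q)$. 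Either you adopt such a spectral-parameter-shift argument, or you must actually prove a quantitative stability estimate for \eqref{Meq} valid for small $\Im(\eta)$; as written, your proof does not establish the claimed $|\Im(\eta)|^{-5}N^{-1/4}$ bound.

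Two secondary points. First, your attribution of the $N^{-1/4}$ rate to a fourth-moment truncation at scale $N^{1/4}$ is not how the rate arises here: the expectation estimate is $O(|\Im(\eta)|^{-5}N^{-1/2})$ without any truncation (second moments of quadratic forms suffice, since the $g_{cd}$ are bounded), and the $N^{-1/4}$ comes from the fluctuation step, where a bounded-difference (McDiarmid) inequality applied column-by-column—using only that resampling a column is a rank-two perturbation, hence a deterministic $2/(|\Im(\eta)|N)$ bound independent of entry size—gives a Gaussian tail at scale $\varepsilon=N^{-1/4}|\Im(\eta)|^{-1}$, which is then summable for Borel--Cantelli. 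Second, the uniformity in $(z,t)$ that you describe as the main obstacle is not required: the proposition (and Lemma \ref{BClemma}) only need an almost sure statement at each fixed $q$, so no net argument is necessary.
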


This estimate is not optimal, but will suffice for our purposes. It would be interesting to see if optimal bounds can be obtained as in \cite{BYY}.

Following a similar argument to \cite{ORSV}, Proposition \ref{mainest} will follow from Lemma \ref{expest} and Lemma \ref{concen}. We begin by deriving an equation for $\E[M_N(q)]$. Bounding the resulting error term gives a bound on $\| \E[M_N(q)] - M(q)\|$. We finish the section by bounding $\| \E[M_N(q)] - M_N(q)\|$.

It suffices to consider $R_{N;11}$ because
\[ \E[M_N] = \E[R_{N;11}] \]
by exchangeability.

Then, by Schur's complement
\begin{align}
\label{Rschur}
 &R_{N;11} =
- \left(-H_{11} + q\otimes I_D +
 H_{1\cdot}^{(1)} R_N^{(1)}  H_{\cdot 1}^{(1)} \right)^{-1}
 \end{align}

 Where $H_{11}$ is the $2D \times 2D$ with scalar entries
 \[ H_{11} = \begin{pmatrix}
0 &
\begin{matrix}
 X_{11}^{1,1} & \ldots & X_{11}^{1,D}  \\ \vdots  &   & \vdots\\  X_{11}^{D,1}   &\ldots  &  X_{11}^{D, D} \end{matrix}  \\
\begin{matrix}
X_{11}^{1,1} & \ldots & X_{11}^{D,1}  \\ \vdots  &   & \vdots\\  X_{11}^{1,D}   &\ldots  &  X_{11}^{D, D} \end{matrix}   &
0
\end{pmatrix}\]
and $H_{1\cdot}^{(1)}$, $H_{\cdot 1}^{(1)}$ are $2D \times 2D$ with vector entries

\[ H_{1\cdot}^{(1)} = \begin{pmatrix}
0 &
\begin{matrix}
 X_{1\cdot}^{1,1(1)} & \ldots & X_{1\cdot}^{1, D(1)}  \\ \vdots  &   & \vdots\\  X_{1\cdot}^{D,1 (1)} &\ldots  &  X_{1\cdot}^{D, D(1)} \end{matrix}  \\
\begin{matrix}
 X_{\cdot 1}^{1,1(1)} & \ldots &X_{\cdot 1}^{D, 1(1)}  \\ \vdots  &   & \vdots\\  X_{\cdot 1}^{1, D (1)} &\ldots  &  X_{\cdot 1}^{D, D(1)} \end{matrix}  &
0
\end{pmatrix}\]
\[
H_{\cdot 1}^{(1)}  =  \begin{pmatrix}
0 &
\begin{matrix}
 X_{\cdot 1}^{1,1(1)} & \ldots & X_{\cdot 1}^{1, D(1)}  \\ \vdots  &   & \vdots\\  X_{\cdot 1}^{D,1 (1)} &\ldots  &  X_{\cdot 1}^{D, D(1)} \end{matrix}  \\
\begin{matrix}
 X_{1\cdot }^{1,1(1)} & \ldots &X_{1\cdot}^{D, 1(1)}  \\ \vdots  &   & \vdots\\  X_{1 \cdot }^{1, D (1)} &\ldots  &  X_{1 \cdot}^{D, D(1)} \end{matrix}  &
0
\end{pmatrix}\]
and $X_{1\cdot}^{c,d(1)}$ ($X_{\cdot1}^{c,d(1)}$) is the first row (column) of the $cd^{th}$ block of $X_N$ with the first entry set to zero.

Each entry of $\left(  H_{1\cdot}^{(1)} R_N^{(1)}  H_{\cdot 1}^{(1)}  \right)  $ is a sum of quadratic forms.

Since the set $\{X_{1\cdot}^{c,d(1)}\}_{1 \leq c,d \leq D}$ is independent from $\{X_{\cdot 1}^{c,d(1)}\}_{1 \leq c,d \leq D}$, \\
$\E[\left(  H_{1\cdot}^{(1)} R_N^{(1)}  H_{\cdot 1}^{(1)}  \right)^{cd}] = 0$, when exactly one of $c,d$ is less than or equal to $D$.

If $c,d$ are both less than or equal to $D$ or both greater than $D$ then $\left(  H_{1\cdot}^{(1)} R_N^{(1)}  H_{\cdot 1}^{(1)}  \right)  $ can have a non-zero expectation. We consider case when $1\leq c, d \leq D$, the other case is similar.

\begin{align*}
\left(  H_{1\cdot}^{(1)} R_N^{(1)}  H_{\cdot 1}^{(1)}  \right)^{cd} &= \sum_{1\leq e,f\leq D} X_{1\cdot}^{c, e(1)} R^{e+D,f+D(1)} X_{ 1\cdot}^{d, f(1)} \\
&=\sum_{1\leq e,f\leq D} \sum_{i,j>1 } X_{1 i}^{c, e} R_{ij}^{e+D,f+D(1)} X_{1 j}^{d, f}.
\end{align*}
Then averaging with respect to the first row and column of $X_N$ gives:
\[\E[ \sum_{1\leq e,f\leq D} \sum_{i,j>1 } X_{1 i}^{c, e} R_{ij}^{e+D,f+D(1)} X_{1 j}^{d,f}]
= \delta_{c,d}  \sum_{e=1}^D \E[ \Tr(R^{e+D,e+D(1)})]  \frac{ g_{ce}^2 }{N}.
\]
Recalling the definition of $\Sigma$, we see that $\E[  H_{1\cdot}^{(1)} R_N^{(1)}  H_{\cdot 1}^{(1)} ] = \Sigma(\E[ M^{(1)}_N] )$.
Taking the expectation of \eqref{Rschur} leads to the equation:
\[  \E[M_N] = \E[ R_{N;11}] = - \E[ \left(  q \otimes I_D
+ \Sigma( \E[M_N] )
+ Z_N
 \right)^{-1}]
\]
where

\[ Z_N =    \left(  H_{1\cdot}^{(1)} R_N^{(1)}  H_{\cdot 1}^{(1)}  \right)    - \Sigma(\E[ M_N] )  -H_{11}   .\]

\begin{lemma} 
\label{errorest}
The expectation of the norm of the error term $Z_N =    \left(  H_{1\cdot}^{(1)} R_N^{(1)}  H_{\cdot 1}^{(1)}  \right)    - \Sigma(\E[ M_N] )  -H_{11} $ is bounded by
$ \E[ \|Z_N \| ] = O(N^{-1/2} |\Im(\eta)|^{-1} )$.
\end{lemma}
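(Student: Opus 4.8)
The plan is to split $Z_N$ according to the definition, namely
\[ Z_N = Z_N' + Z_N'' - H_{11}, \]
with $Z_N' := H_{1\cdot}^{(1)} R_N^{(1)} H_{\cdot 1}^{(1)} - \E[H_{1\cdot}^{(1)} R_N^{(1)} H_{\cdot 1}^{(1)}]$ and $Z_N'' := \E[H_{1\cdot}^{(1)} R_N^{(1)} H_{\cdot 1}^{(1)}] - \Sigma(\E[M_N])$, and to bound the three summands separately. Since each is a matrix of the fixed size $2D\times 2D$, up to a constant depending only on $D$ it suffices to bound the sum of the moduli of its entries. Two of the pieces are immediate. The matrix $H_{11}$ has entries that are individual entries of $X_N$, each of variance $O(N^{-1})$, so by Cauchy--Schwarz $\E\|H_{11}\| \le (\E\|H_{11}\|_{HS}^2)^{1/2} = O(N^{-1/2})$. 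For $Z_N''$ I would invoke the identity $\E[H_{1\cdot}^{(1)} R_N^{(1)} H_{\cdot 1}^{(1)}] = \Sigma(\E[M_N^{(1)}])$ already derived above --- valid up to a deterministic error of norm $O(N^{-1}|\Im(\eta)|^{-1})$ coming from the omitted $(1,1)$-entries of the blocks, which is harmless since $\|R_N^{(1)}\| \le |\Im(\eta)|^{-1}$ --- so that $Z_N'' = \Sigma(\E[M_N^{(1)} - M_N]) + O(N^{-1}|\Im(\eta)|^{-1})$. Because $R_N^{(1)}$ is the resolvent of a matrix obtained from $H_N$ by zeroing one row and one column for each of the $2D$ block-rows, it differs from $R_N$ by a matrix of rank at most $4D$; hence every entry of $M_N^{(1)} - M_N$ is $O(N^{-1})$ times a partial trace of $R_N^{(1)}-R_N$, each of modulus at most $\mathrm{rank}(R_N^{(1)}-R_N)\,\|R_N^{(1)}-R_N\| \le 8D\,|\Im(\eta)|^{-1}$. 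Thus $\|M_N^{(1)}-M_N\| = O(N^{-1}|\Im(\eta)|^{-1})$, and, $\Sigma$ being a fixed linear map, $\|Z_N''\| = O(N^{-1}|\Im(\eta)|^{-1})$.

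All the work is in $Z_N'$. Let $\xi$ collect the first rows and columns of the blocks of $X_N$ occurring in $H_{1\cdot}^{(1)}$ and $H_{\cdot 1}^{(1)}$, and write $\E_\xi := \E[\,\cdot \mid R_N^{(1)}]$; by construction $R_N^{(1)}$ is independent of $\xi$. I would split $Z_N' = (H_{1\cdot}^{(1)} R_N^{(1)} H_{\cdot 1}^{(1)} - \E_\xi[H_{1\cdot}^{(1)} R_N^{(1)} H_{\cdot 1}^{(1)}]) + (\E_\xi[H_{1\cdot}^{(1)} R_N^{(1)} H_{\cdot 1}^{(1)}] - \E[H_{1\cdot}^{(1)} R_N^{(1)} H_{\cdot 1}^{(1)}])$. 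Each entry of the first bracket is the deviation of a bilinear form $\mathbf u^{*} A \mathbf v$ from its conditional mean, where $\mathbf u,\mathbf v$ are vectors built from $\xi$ (mean-zero entries of variance $O(N^{-1})$ and fourth moment $O(N^{-2})$) and $A$ is a sub-block of $R_N^{(1)}$, so that $\|A\|_{HS}^2 \le \|R_N^{(1)}\|_{HS}^2 \le 2N\,|\Im(\eta)|^{-2}$ and $\sum_k|A_{kk}|^2 \le N\,|\Im(\eta)|^{-2}$; the standard variance bound for such forms (see e.g.\ \cite[Lemma B.26]{BS}), in which the finite fourth moment controls the diagonal contribution, then gives conditional variance $O(N^{-1}|\Im(\eta)|^{-2})$ and hence an $L^1$-bound $O(N^{-1/2}|\Im(\eta)|^{-1})$ per entry. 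For the second bracket, the same computation performed conditionally on $R_N^{(1)}$ rather than in full expectation gives $\E_\xi[H_{1\cdot}^{(1)} R_N^{(1)} H_{\cdot 1}^{(1)}] = \Sigma(M_N^{(1)}) + O(N^{-1}|\Im(\eta)|^{-1})$, so this bracket equals $\Sigma(M_N^{(1)} - \E[M_N^{(1)}])$ up to $O(N^{-1}|\Im(\eta)|^{-1})$, and what remains is to bound the $L^1$-fluctuation of the normalized block-traces $\frac1N\Tr R_N^{e+D,e+D(1)}$ about their means. This is a concentration statement of exactly the type needed for Lemma \ref{concen}: exposing the rows of $X_N$ one at a time and running the associated Doob martingale, one sees that changing a single row of $X_N$ perturbs $H_N$ by a matrix of rank at most $2$, so the martingale increments are bounded \emph{almost surely} by $O(N^{-1}|\Im(\eta)|^{-1})$ --- crucially this bound does not involve the sizes of the entries, so no truncation is needed --- and Azuma--Hoeffding then yields sub-Gaussian concentration at scale $N^{-1/2}|\Im(\eta)|^{-1}$, hence the required $L^1$-bound. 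Summing the $(2D)^2$ entrywise estimates gives $\E\|Z_N'\| = O(N^{-1/2}|\Im(\eta)|^{-1})$.

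Adding the three contributions and using that $|\Im(\eta)| = |t|$ stays in the fixed bounded range $(0,T)$, so that $N^{-1/2} \le T\,N^{-1/2}|\Im(\eta)|^{-1}$ and $N^{-1}|\Im(\eta)|^{-1} \le N^{-1/2}|\Im(\eta)|^{-1}$, one obtains $\E\|Z_N\| = O(N^{-1/2}|\Im(\eta)|^{-1})$. I expect the only genuinely delicate step to be the bilinear-form variance estimate: that is where the finite fourth-moment hypothesis is used and where the powers of $|\Im(\eta)|^{-1}$ have to be tracked carefully, and it also requires the routine but slightly tedious check that the diagonal term and, in the complex case, the pseudo-variance terms $\E[\xi_k^2]$ are of the stated order. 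Everything else reduces to rank inequalities for resolvents and a textbook leave-one-out martingale.
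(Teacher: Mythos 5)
Your proposal is correct and follows essentially the same route as the paper's proof: the same three estimates appear — the conditional variance bound for the bilinear forms $Y^{*}R_N^{(1)}Z$ (where the finite fourth moment enters), the low-rank resolvent perturbation inequality controlling $M_N^{(1)}-M_N$, and a leave-one-out martingale argument for the fluctuation of the block traces — only regrouped slightly (you center at $\E[M_N^{(1)}]$ and use Azuma--Hoeffding, while the paper centers at $M_N$ and computes the $L^2$ norm of the martingale differences directly), which makes no essential difference.
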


\begin{proof}
We rewrite
\begin{align*}
Z_N = & \left(  H_{1\cdot}^{(1)} R_N^{(1)}  H_{\cdot 1}^{(1)}  \right) - \Sigma( M_N^{(1)} )
+ \Sigma( M_N^{(1)} ) -  \Sigma( M_N )
 + \Sigma( M_N ) -\Sigma(\E[ M_N] ) -H_{11}
\end{align*}
By the triangle inequality, it suffices to bound each term individually. Furthermore, since $Z_N$ is a matrix, to show its norm goes to zero it suffices to show each entry goes to zero individually. By assumption $\E[ \|H_{11} \| ] = O(N^{-1/2})$.

As noted earlier, $\left(  H_{1\cdot}^{(1)} R_N^{(1)}  H_{\cdot 1}^{(1)}  \right) - \Sigma( M_N^{(1)} ) $ is a sum of quadratic forms with zero mean. After applying Jensen's inequality, we directly compute the second moment of each quadratic form.

Letting $Y,Z$ be $X_{1*}^{c,d}$ or $X_{*1}^{c,d}$ for some $c,d$
\begin{align*}
\E[ \left |\left(  Y^* R_N^{a,b(1)}  Z \right) -  \delta_{Y,Z} \tr_N( R_N^{a,b(1)} ) \right| ] \leq C \left( \frac{1}{N } \tr_N(R^{a,b(1)*}R^{a,b(1)}) \right)^{1/2} \leq \frac{C|\Im(\eta)|^{-1} }{ N^{1/2} }.
\end{align*}

To bound $\Sigma( M_N^{(1)} - M_N )$, we use that the normalized partial trace of a matrix is bounded by its norm times its rank, giving the bound:
\begin{equation}
\label{submatrix}
|M_N^{cd}(q)-M_N^{cd(k)}(q)| \leq \frac{4D |\Im(\eta)|^{-1}}{N } .
\end{equation}
Applying $\Sigma$ only changes the value of the constant.



Finally, we use a martingale decomposition to estimate the final term. Let $\E_{j}$ denote averaging over the first $j$ rows and columns of $H_N$. We briefly use the notation $M_N^{cd[j]}$ to be as $M_N^{cd}$ but with the $j^{th}$ row and column of $H_N$ set equal to zero.
\begin{align*}
\E[ | M_N^{cd} - \E[ M_N^{cd}]| ] &\leq \E[ | M_N^{cd} - \E[ M_N^{cd}]|^2 ]^{1/2} \\
&= \E[| \sum_{j=1}^{N} (\E_{j-1} - \E_{j})[ M_N^{cd} -  M_N^{cd[j]}] |^2 ]^{1/2} \\
&= \left( \sum_{j=1}^{N} \E[ | (\E_{j-1} - \E_{j})[ M_N^{cd} -  M_N^{cd[j]}] |^2 ] \right)^{1/2} \\
&=O(N^{-1/2} |\Im(\eta)|^{-1}) 
\end{align*}
and we used an estimate similar to \eqref{submatrix} in the final line.

\end{proof}

Having bounded the error term, we now bound the difference between $\E[M_N(q)]$ and $M(q)$. Before we begin, recall that there is a unique solution to equation \eqref{Meq} with positive imaginary part.
\begin{lemma}
\label{expest}
Let $M_N(q)$ be as in \eqref{defMN} and $M(q)$ be as in \eqref{Meq}, then
\[ \| \E[M_N(q)] - M(q) \| = O(|\Im(\eta)|^{-5} N^{-1/2}) \]
\end{lemma}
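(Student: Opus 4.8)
The plan is to compare the two fixed-point equations: $\E[M_N]$ satisfies the perturbed equation $\E[M_N] = -(q\otimes I_D + \Sigma(\E[M_N]) + \E[Z_N]')^{-1}$ (where the relevant object is really $\E[(q\otimes I_D+\Sigma(\E[M_N])+Z_N)^{-1}]$), while $M(q)$ satisfies the exact equation \eqref{Meq}. First I would write the resolvent identity
\[
\E[M_N] - M = -\E\!\left[(q\otimes I_D+\Sigma(\E[M_N])+Z_N)^{-1} - (q\otimes I_D+\Sigma(M))^{-1}\right],
\]
expand the difference of inverses using $A^{-1}-B^{-1} = A^{-1}(B-A)B^{-1}$, and isolate the term proportional to $\Sigma(\E[M_N]-M)$. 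This produces a relation of the schematic form $(\E[M_N]-M) = \mathcal{L}(\E[M_N]-M) + \mathcal{E}_N$, where $\mathcal{L}$ is a linear operator built from $M$, $\E[M_N]$, and $\Sigma$, and $\mathcal{E}_N$ collects the genuinely small pieces involving $Z_N$, controlled by Lemma \ref{errorest} as $O(N^{-1/2}|\Im(\eta)|^{-1})$ in expectation (after also using the a priori norm bounds $\|M_N\|, \|M\| \le |\Im(\eta)|^{-1}$ and $\|R_N\|\le|\Im(\eta)|^{-1}$ to bound the resolvents appearing as prefactors). The bound on $\|Z_N\|$ is only in expectation, but since the $\E[(\cdots)^{-1}]$ term carries a deterministic resolvent-norm prefactor, taking expectations is harmless; one does need to be slightly careful to split off the event $\|Z_N\|$ large, where one uses the crude bound $\|(q\otimes I_D + \Sigma(\E[M_N]) + Z_N)^{-1}\| \le |\Im(\eta)|^{-1}$ (the imaginary part of the whole matrix is still $\succeq |\Im(\eta)| I$ because $\Sigma$ preserves positivity and, for $\eta=\sqrt{-1}t$, $Z_N$ contributes a diagonal/anti-diagonal piece plus a sum of quadratic forms whose imaginary part one checks is of the right sign — alternatively, one simply works on the high-probability event and absorbs the complement).

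Next I would invert the stability relation. The key structural fact is that \eqref{Meq} has a \emph{unique} solution with positive imaginary part, found by iterating the contraction-like map $W\mapsto -(q\otimes I_D+\Sigma(W))^{-1}$; this is precisely the input from \cite{HFS} flagged in the text. Concretely, one shows the linear operator $\mathcal{L}$ (evaluated at the true solution $M$) has norm bounded away from $1$ in an appropriate sense — or, more robustly, one avoids ever needing $\|\mathcal{L}\|<1$ by exploiting the iteration: define $M^{(0)} = q\otimes I_D$... actually the cleanest route is to use that the fixed-point map is a strict contraction on the relevant domain in the metric induced by the stability of \eqref{Meq}, so that $\|\E[M_N]-M\| \le (1-\kappa)^{-1}\|\mathcal{E}_N\|$ for some $\kappa = \kappa(|\Im(\eta)|)>0$. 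Tracking how $\kappa^{-1}$ and the resolvent prefactors degrade as $|\Im(\eta)|\to 0$ — each inverse resolvent costs a factor $|\Im(\eta)|^{-1}$, and the composition of several such factors together with the $\Sigma$-operator norm (which is $O(1)$ in $N$ but contributes fixed constants) — is what produces the stated exponent $|\Im(\eta)|^{-5}$. So the final assembly is: $\|\E[M_N]-M\| \le C|\Im(\eta)|^{-4}\cdot \E[\|Z_N\|] + (\text{tail}) = C|\Im(\eta)|^{-4}\cdot O(|\Im(\eta)|^{-1}N^{-1/2})$.

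The main obstacle is the \textbf{stability / invertibility of the linearized fixed-point equation}: one must show that $I-\mathcal{L}$ is invertible with a quantitative, $N$-independent lower bound on $\|(I-\mathcal{L})^{-1}\|^{-1}$, and track its dependence on $|\Im(\eta)|$. Naively $\mathcal{L}$ need not be a contraction in operator norm. The way around this is to use the positivity structure: for $\eta=\sqrt{-1}t$ the solution $M$ has strictly positive imaginary part bounded below, and the resolvent $-(q\otimes I_D + \Sigma(M))^{-1}$ together with the complete positivity of $\Sigma$ forces the relevant linear map to be a genuine contraction in the hyperbolic (Carathéodory) metric on the domain $\{\Im W \succ 0\}$, exactly as in the uniqueness proof of \cite{HFS}. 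Translating that contraction back into a bound in operator norm, at the cost of powers of $|\Im(\eta)|$, is the delicate quantitative step; everything else (Schur complement, the already-proved Lemma \ref{errorest}, the resolvent expansion) is routine bookkeeping.
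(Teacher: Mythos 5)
Your strategy --- linearize the self-consistent equation around the true solution and invert $I-\mathcal{L}$ --- is a legitimate and standard route, but as written it has a genuine gap precisely at the step you yourself flag: you never establish the quantitative stability bound, i.e.\ invertibility of $I-\mathcal{L}$ with an $N$-independent estimate on $\|(I-\mathcal{L})^{-1}\|$ and explicit dependence on $|\Im(\eta)|$. Appealing to ``contraction in the Carath\'eodory metric as in \cite{HFS}'' does not by itself deliver this: an Earle--Hamilton-type argument gives a contraction modulus only after one shows the image of the iteration map lies strictly inside the domain $\{\Im W \succ 0\}$ with a quantitative lower bound on the imaginary parts involved, and converting that hyperbolic contraction into an operator-norm Lipschitz estimate with controlled powers of $|\Im(\eta)|$ is exactly the work that is missing; as a result the exponent $|\Im(\eta)|^{-5}$ in your final display is asserted rather than derived. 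Two smaller points: Lemma \ref{errorest} bounds only $\E[\|Z_N\|]$, so ``working on the high-probability event'' needs an explicit Markov/truncation step; and the crude bound $\|(q\otimes I_D+\Sigma(\E[M_N])+Z_N)^{-1}\|\le|\Im(\eta)|^{-1}$ is most easily justified by recognizing this matrix as $-R_{N;11}$, a compression $PR_NP^*$ of the resolvent, rather than by examining the sign of $\Im Z_N$.

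The paper sidesteps the stability question entirely by a different device, which is worth knowing. Starting from \eqref{Minvert} it sets $Z_N' = (q\otimes I_D+\Sigma(\E[M_N]))^{-1}Z_N(q\otimes I_D+\Sigma(\E[M_N])+Z_N)^{-1}$, shows $\|(q\otimes I_D+\Sigma(\E[M_N]))^{-1}\|\le 2|\Im(\eta)|^{-1}$ and $\|Z_N'\|\le C|\Im(\eta)|^{-2}\|Z_N\|$ (using $\E[\|Z_NR_{N;11}\|]\le 1/2$, available because of the a priori reduction to $|\Im(\eta)|>4^{1/4}N^{-1/8}$), and then observes that $W:=\E[M_N(q)]-Z_N'$ satisfies the \emph{exact} equation $W=-(q_N+\Sigma(W))^{-1}$ at the shifted spectral parameter $q_N:=q\otimes I_D+\Sigma(Z_N')$, whose imaginary part still exceeds $\Im(\eta)/2$ for large $N$. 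The uniqueness theorem of \cite{HFS} then identifies $W=M(q_N)$, so $\|\E[M_N(q)]-M(q)\|\le\|M(q_N)-M(q)\|+\|Z_N'\|$, and only continuity of $q\mapsto M(q)$ (a resolvent-type bound costing $|\Im(\eta)|^{-2}$ times $\|q_N-q\|=O(|\Im(\eta)|^{-3}N^{-1/2})$) is needed to land on $O(|\Im(\eta)|^{-5}N^{-1/2})$; no linearized-stability operator ever appears. To complete your version you would have to supply the quantitative stability estimate you defer; otherwise the perturbed-spectral-parameter argument is the shorter and cleaner path.
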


\begin{proof}

We assume that $|\Im(\eta)|^{} > 4^{1/4} N^{-1/8} $ otherwise the trivial bound of $|\Im(\eta)|^{-1}$ on the norm of the resolvent gives the desired estimate.

Rewriting
\begin{align} 
\label{Minvert}
\E[M_N] =& \E[ -(q\otimes I_D + \Sigma(\E[M_N]) +Z_N )^{-1}] \\
=&  -(q\otimes I_D + \Sigma(\E[M_N])  )^{-1}  \nonumber \\
&+ (q\otimes I_D  + \Sigma(\E[M_N]) )^{-1} \E[ Z_N (q\otimes I_D + \Sigma(\E[M_N]) +Z_N  )^{-1} ] \nonumber
\end{align}
Let  \[Z'_N = (q\otimes I_D + \Sigma(\E[M_N]) )^{-1}Z_N(q\otimes I_D + \Sigma(\E[M_N]) +Z_N )^{-1} .\]

\[ \E[\| Z_N (q + \Sigma(\E[M_N]) +Z_N  )^{-1} \| ] = \E[\| Z_N R_{N;11}\| ]   \leq 1/2\]
 so $\E[1+ Z_N R_{N;11}]$ is invertible. After solving for $(q\otimes I_D + \Sigma(\E[M_N])  )^{-1}$ in \eqref{Minvert}, this estimate implies
\[  \| (q\otimes I_D + \Sigma(\E[M_N])  )^{-1} \| \leq 2 |\Im(\eta)|^{-1} \text{ and } \|  Z_N'  \| \leq C |\Im(\eta)|^{-2} \| Z_N \| .\]

Let $q_N := q\otimes I_D + \Sigma(Z_N')$, which has imaginary part bigger that $\Im(\eta)/2>0$ for large $N$. Then
\[ \E[M_N(q) ] - Z_N' = -(q_N + \Sigma(\E[ M_N( q)] - Z_N') )^{-1} \]
then evaluating \eqref{Meq} at $q_N$ and using the uniqueness of the solution shows:
\[ \E[M_N(q)]  - Z_N' = M(q_N) .\]
Finally:
\[ \|  \E[M_N(q)] - M(q)\| =  \| M(q_N) - M(q)  + Z_N' \| = O(|\Im(\eta)|^{-5} N^{-1/2})  \]

\end{proof}

We conclude with a concentration estimate.
\begin{lemma}
\label{concen} Let $M_N(q)$ be as in \eqref{defMN} then almost surely
\[ \|M_N(q) - \E[M_N(q)] \| \leq N^{-1/4} | \Im(\eta)|^{-1}   \]
\end{lemma}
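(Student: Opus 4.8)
The plan is to run a bounded--difference martingale argument entry by entry and then upgrade the resulting exponential tail bound to an almost--sure statement via the Borel--Cantelli lemma. Since $D$ is fixed and $\|A\| \le 2D \max_{1\le c,d\le 2D}|A_{cd}|$ for any $2D\times 2D$ matrix $A$, it suffices to show that for each fixed pair $c,d$ one has $|M_N^{cd}(q) - \E[M_N^{cd}(q)]| \le \tfrac{1}{2D}N^{-1/4}|\Im(\eta)|^{-1}$ for all sufficiently large $N$, almost surely. First I would set up the martingale exactly as in the proof of Lemma~\ref{errorest}: let $\E_j$ denote conditional expectation given the first $j$ rows and columns of $H_N$, so that $\E_0=\E$, $\E_{2N}$ is the full conditioning, and $M_N^{cd} - \E[M_N^{cd}] = \sum_{j=1}^{2N}(\E_j - \E_{j-1})M_N^{cd}$. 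Writing $M_N^{cd[j]}$ for the analogue of $M_N^{cd}$ computed from $H_N$ with its $j^{\mathrm{th}}$ row and column set to zero, one has $(\E_j-\E_{j-1})M_N^{cd[j]}=0$ since $M_N^{cd[j]}$ does not depend on the $j^{\mathrm{th}}$ row/column, so the $j^{\mathrm{th}}$ martingale increment equals $(\E_j-\E_{j-1})\bigl(M_N^{cd}-M_N^{cd[j]}\bigr)$.

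The key point is the deterministic per--step bound. The perturbation $H_N - H_N^{[j]}$ has rank at most two, and all resolvents in play have operator norm at most $|\Im(\eta)|^{-1}$; feeding this into the resolvent identity together with the fact that a normalized partial trace is controlled by $(\mathrm{rank})\times(\mathrm{norm})/N$ --- the same mechanism behind \eqref{submatrix} --- yields $|M_N^{cd}-M_N^{cd[j]}| \le C|\Im(\eta)|^{-1}/N$ with $C=C(D)$, hence the martingale increments are bounded by $c_j := 2C|\Im(\eta)|^{-1}/N$ almost surely. I stress that this bound uses no moment assumption on the entries, so the Azuma--Hoeffding inequality applies verbatim and gives
\[
\Prob\bigl(|M_N^{cd}(q)-\E[M_N^{cd}(q)]|>t\bigr) \le 2\exp\!\Bigl(-\tfrac{t^2}{2\sum_{j=1}^{2N}c_j^2}\Bigr) \le 2\exp\!\bigl(-c_D\, t^2 N |\Im(\eta)|^2\bigr),
\]
using $\sum_{j=1}^{2N}c_j^2 = O(|\Im(\eta)|^{-2}/N)$. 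Taking $t=\tfrac{1}{2D}N^{-1/4}|\Im(\eta)|^{-1}$ makes the right side $2\exp(-c_D' N^{1/2})$, which is summable in $N$; a union bound over the $(2D)^2$ entries and Borel--Cantelli then finish the proof.

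I do not expect a genuine obstacle here: the only thing that must be checked carefully is the deterministic estimate $|M_N^{cd}-M_N^{cd[j]}| = O(|\Im(\eta)|^{-1}/N)$, i.e. that zeroing one row and column of $H_N$ is a rank--two perturbation whose effect on a normalized partial trace is $O(1/N)$ uniformly in the randomness --- and this is precisely the mechanism already used for \eqref{submatrix} and in the last display of the proof of Lemma~\ref{errorest}. The reason one conditions on whole rows/columns of $H_N$ rather than applying a McDiarmid-type inequality to the individual entries of $X_N$ is that those entries are unbounded (only a finite fourth moment is assumed), whereas partial traces of resolvents are Lipschitz in this strong, deterministic sense once an entire row/column is removed; thus the finite fourth moment hypothesis plays no role in this particular lemma, and the same argument would go through under weaker assumptions.
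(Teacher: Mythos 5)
Your proposal is correct and is essentially the paper's argument: the paper applies McDiarmid's bounded-difference inequality with the columns of $X_N$ as the independent variables, using exactly the same deterministic rank-two resolvent-perturbation bound $|M_N^{cd}-\hat M_N^{cd}| = O(|\Im(\eta)|^{-1}/N)$ (the mechanism of \eqref{submatrix}), then Borel--Cantelli. Your Azuma--Hoeffding argument on the Doob martingale is just the standard proof of McDiarmid's inequality, so the two proofs coincide in substance (and your observation that no moment assumption is needed here is consistent with the paper's use of the inequality).
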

\begin{proof}

We use McDiarmid's inequality, which states:\\
Suppose $x_1, x_2, \dots, x_n$ are independent random vectors and assume that
$f$ satisfies
\[\sup_{x_1,x_2,\dots,x_n, x_i'} |f(x_1,x_2,\dots,x_n) - f(x_1,x_2,\dots,x_{i-1},x_i', x_{i+1}, \dots, x_n)|
\le c_i \qquad \text{for} \quad 1 \le i \le n \]

then any $\varepsilon > 0$:
\[\Pr ( |E[f(x_1, x_2, \dots, x_n)] - f(x_1, x_2, \dots, x_n)| \ge \varepsilon )
\le 2 \exp \left( - \frac{2 \varepsilon^2}{\sum_{i=1}^n c_i^2} \right). \;
\]
%
%

We apply the inequality choosing $x_i$ to be the $i^{th}$ column of $X_N$. Once again, it suffices to prove the inequality for each entry of $M_N$, let $1\leq c,d \leq D$. Let $\hat M_N^{cd}(q)$ be the same as $M_N^{cd}(q)$ except with the $i^{th}$ column of $X_N$ resampled, then using that the difference between $X_N$ and its resampled version is rank two, gives the bound:
\[ | M_N^{cd}(q) - \hat  M_N^{cd}(q) | \leq \frac{2 }{\eta N} \]
and McDiarmid's inequality implies
\[ \P( |M_N^{cd}(q) - \E[M_N^{cd}(q)] | > \varepsilon  ) \leq 2 e^{- 2^{-1} N |\Im(\eta)|^{2} \epsilon^2 } \]
or
\[ \P( |M_N^{cd}(q) - \E[M_N^{cd}(q)] | > N^{-1/4} | \Im(\eta)|^{-1} ) \leq C e^{- N^{1/2} } \]
the Borel-Cantelli lemma completes the proof.

\end{proof}

\section{Logarithmic Integrability}
\label{logi}
By Markov's inequality, to prove \eqref{logieq} it suffices to show almost surely
\begin{equation} \label{logip}  \limsup_{N\geq 1}  \int_0^{\infty} x^{p} d \nu_{z,N}(x) < \infty \text{ and } \limsup_{N\geq 1}  \int_0^{\infty} x^{-p} d \nu_{z,N}(x) < \infty \end{equation}
for some $p>0$.
Let $s_{N,z} \leq s_{N-1,z} \leq \ldots \leq s_{1,z}$ be the ordered singular values of $X_N-z$. For  $0<p\leq 2$, the first inequality follows from
\[ \int_0^{\infty} x^{2} d \nu_{z,N}(x) = \frac{1}{N} \sum_{i=1}^{N} s_{i,z}^2 \leq |z|^2 +  \frac{1}{N} \sum_{i=1}^{N} s_{i,0}^2 = |z|^2 + \frac{1}{N} \Tr(X_N X_N^*) \]
Which is almost surely finite by the strong law of large numbers (applied separately to each block).

The second inequality will follow from a bound on the least singular value proved in Corollary \ref{lsv2} and control of moderate singular values proved in Lemma \ref{wegner}.

The bound on the least singular value only requires the entries of the random matrix, $X_N$, to have finite variance. To prove Corollary \ref{lsv2} we use the following theorem:

\begin{theorem}[\cite{TV}, Theorem 2.1]
\label{TVt}
Let $a, c_1$ be positive constants, and let $x$ be a complex-valued random variable with non-zero finite variance. Then there are positive constants $b$ and $c_2$ such that the following holds: if $Y_n$ is the random matrix of order $n$ whose entries are iid copies of $x$, and $M$ is a deterministic matrix of order $n$ with spectral norm at most $n^{c_1}$, then,
\[ \P( \sigma_n (M+Y_n) \leq n^{-b}) \leq c_2 n^{-a}.\]
\end{theorem}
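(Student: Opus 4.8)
The statement is imported verbatim from \cite{TV}, so in the write-up one may simply invoke it; here I indicate how one would prove it from scratch, along the lines of the Rudelson--Vershynin and Tao--Vu analysis of least singular values. \emph{Reductions.} Since $x$ has finite variance it has finite mean; subtracting $\E x$ (absorbing the rank-one matrix $(\E x)\mathbf 1_n\mathbf 1_n^{*}$, of norm $O(n)$, into $M$) we may take $\E x=0$. Replacing $x$ by $x\,\indicator{|x|\le n^{1+\delta}}$ for a fixed $\delta\ge a/2$, a union bound over the $n^2$ entries shows $Y_n$ equals its truncated version outside an event of probability $\le n^{2}\,\P(|x|>n^{1+\delta})\le \E|x|^{2}\,n^{-2\delta}\le n^{-a}$ for $n$ large; the induced change of mean is $O(n^{-1-\delta})$ and contributes another negligible rank-one shift. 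Hence we may assume the entries are mean zero, bounded by $n^{1+\delta}$, with variance in a fixed compact subinterval of $(0,\infty)$ for $n$ large, and that $\|M+Y_n\|\le n^{K}$ deterministically on this event for some $K=K(c_1,\delta)$; crucially, since $x$ is \emph{fixed}, the non-degeneracy constants used below (e.g.\ the lower large-deviation rate of $\E e^{-\lambda x^{2}}$ for $\lambda>0$) stay bounded away from degeneracy as $n\to\infty$.

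\emph{Geometric reduction.} Write $A:=M+Y_n$, with columns $A_1,\dots,A_n$, and $H_j:=\mathrm{span}\{A_i:i\ne j\}$. The negative second moment identity $\|A^{-1}\|_{HS}^{2}=\sum_j\mathrm{dist}(A_j,H_j)^{-2}$ gives $\sigma_n(A)\ge n^{-1/2}\min_j\mathrm{dist}(A_j,H_j)$, so after a union bound over $j$ it suffices to prove $\P(\mathrm{dist}(A_1,H_1)\le n^{1/2-b})\le c_2 n^{-a-1}$. Conditioning on $A_2,\dots,A_n$, hence on $H_1$ (which is independent of $A_1$), and taking a unit normal $w$ to $H_1$, one has $\mathrm{dist}(A_1,H_1)=\bigl|\langle M_1,w\rangle+\sum_k w_k\xi_k\bigr|$ with $\xi_k$ iid copies of the (reduced) entry; this is a one-dimensional small-ball (Lévy concentration) estimate for a deterministic translate of $\sum_k w_k\xi_k$, whose quality is governed by how spread out and how arithmetically unstructured $w$ is.

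\emph{Compressible/incompressible split.} For $w$ compressible (within a polynomially small distance of an $(n/\log n)$-sparse vector), one combines an $\varepsilon$-net of the sparse sphere with the fixed-vector estimate $\P(\|Av\|\le n^{1/2-b})\le e^{-cn}$ --- proved by a Chernoff bound on the number of rows $i$ with $|(Av)_i|$ bounded below, using non-degeneracy of the entries --- to conclude that $\inf_{v\ \mathrm{compressible}}\|Av\|>n^{1/2-b}$ outside probability $\le c_2 n^{-a-1}$, so the normal to a typical $H_1$ is incompressible. For incompressible $w$, Esseen's inequality bounds the concentration function $Q\bigl(\sum_k w_k\xi_k,\varepsilon\bigr)$ by roughly $\varepsilon\sqrt n+\mathrm{LCD}(w)^{-1}+e^{-cn}$, where $\mathrm{LCD}(w)$ is the essential least common denominator of $w$; a further net argument over incompressible vectors of small $\mathrm{LCD}$ shows that the normal to the random hyperplane $H_1$ satisfies $\mathrm{LCD}(w)\ge n^{A}$ outside probability $\le c_2 n^{-a-1}$. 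Combining the three contributions and choosing $b$ and $A$ large in terms of $a$ yields the single-column bound, and a union bound over $j$ completes the proof.

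\emph{Main difficulties.} The delicate points are, first, that $\|M\|$ is only assumed polynomially (not $O(\sqrt n)$) bounded, which inflates all the nets, since they must resolve the scale $n^{1/2-b-K}$; this is handled by letting the compressibility and sparsity parameters decay slowly with $n$ (as reflected above) or by first splitting off the large singular directions of $M$, on which the random perturbation is irrelevant. Second, $x$ is allowed to be lattice-valued, so the one-dimensional small ball cannot be pushed below $n^{-1/2}$ by non-degeneracy alone: obtaining an arbitrary polynomial rate $n^{-a}$ forces the inverse-Littlewood--Offord / least-common-denominator analysis of $w$, and this must be carried out assuming only a finite second moment. Both of these are exactly what is dealt with in \cite{TV}.
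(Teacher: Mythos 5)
This statement is imported verbatim from \cite{TV} and used as a black box in Corollary \ref{lsv2}; the paper supplies no proof of it, so your decision to simply invoke the citation is exactly what the paper does. Your accompanying sketch (distance-to-hyperplane reduction via the negative second moment identity, compressible/incompressible split, small-ball estimates for the hyperplane normal) is a fair summary of how such least-singular-value bounds are established in the literature, though the argument in \cite{TV} itself is organized around Tao and Vu's inverse Littlewood--Offord theorems and rich/poor vector dichotomy rather than the Rudelson--Vershynin LCD formalism you describe; in any case nothing beyond the citation is needed here.
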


\begin{corollary}
\label{lsv2}
Let $a, c_1'$ be positive constants.
Let $D$ be an $n \times n$  random matrix with iid entries and finite variance and $M$ be an $n \times n$ deterministic matrix of spectral norm at most $N^{c_1'}$. Let $A$ be an $(N-n) \times (N-n)$ matrix, $B$ be an $(N-n)\times n$ matrix, and $E$ be an $n\times (N-n)$ matrix. Assume $N/n$ is equal to a non-zero constant plus a term that is $o(N)$.

Assume there exist $b''$ and an event occurring with probability $1-N^{-a}$ on which $A^{-1}$  has norm bounded by $N^{b''}$ and $\|B\|, \|E\|$ have norm $N^{b'''}$. Furthermore assume the $D$ is independent of $A$, $B$ and $E$.

Let $Y = \begin{pmatrix} A & B \\ E & D \end{pmatrix}$ and let $\mathcal{M}= \begin{pmatrix} 0 & 0 \\ 0 & M \end{pmatrix}$,
then there are positive constants $b'$ and $c_2'$ (depending on $a$, $c_1'$, $b''$ and $b'''$) such that
\[ \P( \sigma_N( \mathcal{M}+Y) \leq N^{-b'}) \leq c_2' N^{-a}.\]
\end{corollary}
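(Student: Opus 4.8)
The plan is to reduce the least singular value bound for $\mathcal{M}+Y$ to the already-known bound of Theorem \ref{TVt} applied to the lower-right block, by using the Schur complement to express $\sigma_N(\mathcal{M}+Y)$ in terms of a perturbed version of $M+D$. First I would recall the standard fact that for an invertible block matrix the smallest singular value of the whole matrix is comparable (up to polynomial-in-$N$ factors controlled by $\|A^{-1}\|$, $\|B\|$, $\|E\|$, and $\|D\|$) to the smallest singular value of the Schur complement $S := (M+D) - E A^{-1} B$. Concretely, on the good event where $\|A^{-1}\| \le N^{b''}$ and $\|B\|,\|E\| \le N^{b'''}$, one has the deterministic inequality $\sigma_N(\mathcal{M}+Y) \ge c\, \sigma_n(S) / (1 + \|A^{-1}\|\|B\|\|E\| + \dots)$, so that if $\sigma_n(S) \ge n^{-b}$ then $\sigma_N(\mathcal{M}+Y) \ge N^{-b'}$ for a suitable $b'$ depending on $a, c_1', b'', b'''$. (The reverse direction, that $\|B\|,\|E\|$ are indeed polynomially bounded with high probability, is part of the hypotheses, so I only need this one direction.)

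Next I would observe that $S = D + (M - E A^{-1} B)$ has the form "iid matrix plus deterministic matrix," which is exactly the setting of Theorem \ref{TVt}, provided the deterministic part has spectral norm at most $n^{c_1}$ for some constant $c_1$. On the good event, $\|M - EA^{-1}B\| \le \|M\| + \|E\|\|A^{-1}\|\|B\| \le N^{c_1'} + N^{2b'''+b''}$, which is at most $N^{c_1}$ for $c_1 := \max(c_1', 2b''' + b'') + 1$; converting $N$ to $n$ only changes the constant since $N/n$ is bounded. Crucially, $D$ is independent of $A, B, E$ by hypothesis, so conditionally on $(A,B,E)$ the matrix $M - EA^{-1}B$ is a fixed deterministic matrix and $D$ is still a fresh iid matrix. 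Applying Theorem \ref{TVt} conditionally with this $c_1$ and the given $a$ yields constants $b, c_2$ with $\P(\sigma_n(S) \le n^{-b} \mid A,B,E) \le c_2 n^{-a}$ on the good event; integrating out and adding the probability $N^{-a}$ of the bad event, together with $n \sim N$, gives $\P(\sigma_N(\mathcal{M}+Y) \le N^{-b'}) \le c_2' N^{-a}$ for appropriate $b', c_2'$.

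The step I expect to be the main obstacle is making the Schur-complement comparison $\sigma_N(\mathcal{M}+Y) \gtrsim \sigma_n(S) \cdot N^{-O(1)}$ fully rigorous. One has to be careful that $A$ is invertible on the good event (which follows from $\|A^{-1}\| \le N^{b''}$), then write $\mathcal{M}+Y = \begin{pmatrix} A & B \\ E & M+D\end{pmatrix}$ and use the block LDU factorization $\begin{pmatrix} I & 0 \\ EA^{-1} & I\end{pmatrix}\begin{pmatrix} A & 0 \\ 0 & S\end{pmatrix}\begin{pmatrix} I & A^{-1}B \\ 0 & I\end{pmatrix}$; since the two triangular factors have operator norm bounded by $1 + \|E\|\|A^{-1}\|$ and $1+\|A^{-1}\|\|B\|$ respectively, and their inverses likewise, the smallest singular value of the product is at least $\sigma_{\min}(A)\wedge\sigma_{\min}(S)$ divided by the product of these norm bounds squared. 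The term $\sigma_{\min}(A) \ge \|A^{-1}\|^{-1} \ge N^{-b''}$ is harmless, so the bottleneck is genuinely $\sigma_n(S)$, and one collects all the polynomial factors into the exponent $b'$. A secondary bookkeeping point is ensuring the "$o(N)$" fluctuation in $N/n$ does not spoil the constants — since $N/n$ stays bounded above and below, every estimate of the form $n^{-b} \ge N^{-b'}$ goes through with $b'$ enlarged by a harmless additive constant.
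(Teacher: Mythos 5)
Your proposal is correct and follows essentially the same route as the paper: both reduce the problem, on the high-probability event where $A^{-1}$, $B$, $E$ are polynomially bounded, to the Schur complement $(M+D)-EA^{-1}B$, and then invoke Theorem \ref{TVt} conditionally on $(A,B,E)$, using the independence of $D$ and the polynomial bound on $\|M-EA^{-1}B\|$. The only difference is in packaging: you organize the reduction via the block LDU factorization and norm bounds on the unit triangular factors, whereas the paper argues directly with a unit vector attaining $\sigma_N(\mathcal{M}+Y)$ and a case split on the size of its lower component $u_2$; both versions absorb the same polynomial losses into the exponent $b'$.
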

\begin{proof}
Recall that the least singular value of a matrix $X$ can be characterized as $\min_{\|u\|=1} \|X u\|$ or $\|X^{-1}\|^{-1}$.
Let $u$ be a unit vector such that $\|(\mathcal{M}+Y)u\| = \sigma_N(\mathcal{M}+Y)$ and partition $u$ as $\begin{pmatrix} u_1 \\ u_2 \end{pmatrix} $, where $u_1$ is $N-n$ dimensional and $u_2$ is $n$ dimensional. Let $ v:= (\mathcal{M}+Y) u $ and partition $v$ similarly as $\begin{pmatrix} v_1 \\ v_2 \end{pmatrix} $. Expressing $v = ( \mathcal{M}+Y)u$ in terms of the blocks leads to \[ A u_1 + B u_2 = v_1 \text{ and } E u_1 + (D+M) u_2 = v_2. \]

If $\|u_2\| = o(N^{-b''-b'''})$ then $A u_1 = v_1 + o(N^{-b''})$ and our assumption on the least singular value of $A$ implies, $\| v_1 \| \geq c_3 N^{-b''}$ with probability $1 - N^{-a}$ for some $c_3>0$.

Otherwise, using the assumption that there is an event of probability $1-N^{-a}$ on which $A$ is invertible, we solve for $u_1$. Substituting into the second equation gives
\[ ( M + D - E A^{-1} B) u_2 = v_2 - EA^{-1} v_1. \]
Since $E A^{-1} B$ is independent of $D$ and has norm bounded by $N^{b''+2b'''}$ with probability $1-N^{-a}$, Theorem \ref{TVt} implies there exist a $b$ such that $\| v_2 - E A^{-1} v_1 \| \geq n^{-b} \| u_2 \|$. Implying that $\| v \| \geq c_4  N^{-b-2b''-2b'''}$ for some $c_4>0$.

\end{proof}

To apply this corollary to our case, we must show the operator norm of a rectangular matrix with iid random entries grows polynomially. Using the assumption that the entries have finite variance along with Markov's inequality and the union bound implies there is an event occurring with probability $1-N^{-a}$ such that all the entries of a block of $X_N$ are bounded by $C N^{a/2}$, for some $C$. Then bounding the Frobenius norm shows that on this event the operator norm the norm of a block is $O( N^{a/2+1})$. 
We remark that this estimate, although sufficient, is far from optimal. For instance, if the entries of a random matrix are bounded then with probability $1 - O(N^{-a})$ the operator norm of a matrix with iid entries is $O(1)$, see for instance \cite[Section 2.3]{T}. 

Concatenating matrices only increases the operator norm by a constant factor. Thus the corollary can inductively be applied to the matrices formed by the $k$ upper left blocks for $1\leq k\leq D$ of $X_N$ giving the almost sure polynomial bound on the least singular value.


The following lemma gives control on moderate singular values.

\begin{lemma}
\label{wegner}
There exist a $\gamma> 0$ and $C>0$ such that $\nu_{z,N}([0,I]) \leq C I$ for every $I \geq N^{-\gamma}$.
\end{lemma}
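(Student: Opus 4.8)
The plan is to bound the counting function $\nu_{z,N}([0,I])$ — equivalently the number of singular values of $X_N - z$ below $I$ — by relating it to the imaginary part of the Stieltjes transform $a_N(z,\eta)$ at scale $\eta = \sqrt{-1} I$. Recall that for any probability measure $\nu$ on $\R$, one has the elementary bound $\nu([-I,I]) \leq C\, I\, \Im\big(\int (x - \sqrt{-1} I)^{-1} d\nu(x)\big)$, obtained because the Poisson kernel $I/(x^2 + I^2)$ is bounded below by $1/(2I)$ on $[-I,I]$. Applied to $\nu_{z,N}$, whose Stieltjes transform at $\eta = \sqrt{-1} t$ is $a_N(z,\sqrt{-1}t)$ (as recorded in Section \ref{MVST}), this gives $\nu_{z,N}([-I,I]) \leq C\, I\, \Im a_N(z, \sqrt{-1} I)$. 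Since the spectrum of $H_N(z)$ is symmetric, $\nu_{z,N}([0,I]) \le \nu_{z,N}([-I,I])$, so it suffices to show $\Im a_N(z,\sqrt{-1} I) = \Im\big(\sum_c \alpha_c M_N^{cc}(z,\sqrt{-1}I)\big)$ stays bounded, uniformly in $z$ in a compact set and uniformly for $I \ge N^{-\gamma}$, with a suitable small $\gamma > 0$.

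The next step is to upgrade Proposition \ref{mainest} so that it holds not just for fixed $\eta$ bounded away from zero but for $\eta = \sqrt{-1} I$ with $I$ as small as $N^{-\gamma}$. Examining the proof of Proposition \ref{mainest}: Lemma \ref{expest} gives $\|\E[M_N] - M\| = O(|\Im\eta|^{-5} N^{-1/2})$ and Lemma \ref{concen} gives the concentration bound $O(|\Im\eta|^{-1} N^{-1/4})$ almost surely. Both error terms are powers of $|\Im\eta|^{-1}$ times a negative power of $N$, so choosing $\gamma$ small enough (e.g. any $\gamma < 1/20$, making $|\Im\eta|^{-5} N^{-1/2} = N^{5\gamma - 1/2} \to 0$) keeps the total error $o(1)$, hence $\|M_N(z,\sqrt{-1}I) - M(z,\sqrt{-1}I)\| \to 0$ almost surely, uniformly on the relevant range. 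Then it remains to bound $\|M(z,\sqrt{-1}I)\|$ — and in particular $\Im a$, $\Im\widehat a$ — uniformly as $I \to 0$. From the explicit fixed-point equations \eqref{aeq}--\eqref{waeq}, with $\eta = \sqrt{-1} I$, one sees that the imaginary parts of $a_c, \widehat a_c$ solve a self-consistent system; writing $a_c = \sqrt{-1}\beta_c + \ldots$ and using positivity of $G, \widehat G$ and the structure $|z|^2 - (\ldots)(\ldots)$ in the denominators, one shows that the vector $\beta = \Im a$ satisfies a bound of the form $\beta_c \le [\text{something involving } G\beta]$ that, by Perron–Frobenius and the fact that we are in the regime $|z|^2 \le \rho(G)$ (or slightly beyond), forces $\|\beta\|$ to remain $O(1)$ as $I \to 0$. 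This is exactly the kind of a priori control whose existence is asserted implicitly in Section \ref{analysis}; concretely one argues that $\Im a$ cannot blow up because the limiting measure $\nu_{z}$ has a bounded density near $0$ away from the edge, or, more robustly, one uses that $M(z,\sqrt{-1}I)$ is the Stieltjes transform of a measure with sub-exponential tails whose mass near the origin can be controlled via the least-singular-value input of Corollary \ref{lsv2}.

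I expect the main obstacle to be the last point: establishing the \emph{uniform} (in $I \to 0$ and in $z$) boundedness of $\|M(z,\sqrt{-1}I)\|$, equivalently that the limiting singular-value measure $\nu_z$ of $X_N - z$ has a bounded density in a neighborhood of $0$ — or at least that $\nu_z([0,I]) \le CI$. One clean route around analyzing the fixed-point system directly is to bypass the limit entirely: combine the already-established polynomial lower bound on the least singular value (Corollary \ref{lsv2}, giving $s_{N,z} \ge N^{-b'}$ with overwhelming probability) with a rank-based deterministic interlacing argument. Deleting a single row and column of $X_N - z$ changes each singular value by an interlacing amount and changes $\nu_{z,N}$ by $O(1/N)$ in Kolmogorov distance; iterating and using that the least singular value of each sufficiently-large submatrix is polynomially bounded below (again via Corollary \ref{lsv2} applied to principal submatrices), one gets that at most $CIN$ singular values can lie in $[0,I]$ once $I \ge N^{-\gamma}$, which is precisely the claim. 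Getting the bookkeeping right in this iteration — ensuring the least-singular-value bound survives the deletions with the same exponent and that the accumulated rank-perturbation error stays below $CIN$ — is the delicate part, but it is routine in spirit and parallels standard Wegner-estimate arguments in the random-matrix literature (e.g. \cite{TV, BC}).
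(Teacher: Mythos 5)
Your first two paragraphs follow the paper's own route almost exactly: bound the indicator by the Poisson kernel so that $\nu_{z,N}([0,t]) \leq 2t\,\Im a_N(z,\sqrt{-1}t)$, then note that the error in Proposition \ref{mainest} is an explicit power $|\Im(\eta)|^{-5}N^{-1/4}$, so no ``upgrade'' is even needed -- the proposition is already stated for all $0<t<T$, and one simply restricts to $t\geq N^{-\gamma}$ with $5\gamma\leq 1/4$ (the paper takes $t>N^{-1/20}$). The genuine gap is the step you yourself flag: the uniform boundedness of the limiting quantity $|a(q)|$ as $t\to 0$. You leave this unproved, and your sketches do not close it. The Perron--Frobenius sketch is off target: boundedness is needed (and true) for \emph{all} $z$, not just $|z|^2\leq\rho(G)$, and the paper proves it by a short contradiction inside the proof of Theorem \ref{supp}: if some $h_c\to\infty$ in \eqref{heq}, the fixed-point equation forces $[\widehat G\widehat h]_c\to 0$, hence (positivity of the entries of $\widehat G$) all $\widehat h_d\to 0$, contradicting the identity $\sum_c\alpha_c h_c=\sum_c\alpha_c\widehat h_c$. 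Appealing instead to ``the limiting measure has bounded density near $0$'' is circular, since that is essentially what Lemma \ref{wegner} is establishing.

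Your fallback route -- interlacing plus Corollary \ref{lsv2} applied to principal submatrices -- does not work as described. Deleting $m$ rows and columns and bounding the least singular value of the resulting submatrix below by $N^{-b'}$ only shows, via interlacing, that an intermediate singular value of $X_N-z$ exceeds $N^{-b'}$; since $b'$ is a fixed constant coming from Theorem \ref{TVt} and is not at your disposal, $N^{-b'}\ll I$ for $I\geq N^{-\gamma}$ with small $\gamma$, so this gives no control on \emph{how many} singular values lie in $[0,I]$ and cannot yield the linear-in-$I$ count $\nu_{z,N}([0,I])\leq CI$. A deterministic-plus-least-singular-value argument of that type would require the distance-to-subspace / negative second moment machinery of \cite{TV}, which is a genuinely different (and heavier) argument than what you outline; the paper avoids it entirely by the resolvent bound described above.
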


\begin{proof}

We first observe that for $t>0$, $1_{\{|x| \leq t\}}(x) \leq 2 t \Im(1/(x-\sqrt{-1}t)$. Furthermore $a(q)$ is bounded (see proof of Theorem \ref{supp}) in the upper half plane,
therefore choosing $q$ such that $\eta = \sqrt{-1} t$ and applying Proposition \ref{mainest} gives
\[ \nu_{z,N}([0,t]) \leq 2 t \Im( a_N(q) ) \leq  2 |t| ( (t^{-5} N^{-1/4}) +  |a(q)| )   \]
this term is bounded by $C t$ for all $t> N^{-1/20}$.

\end{proof}

To complete the proof of \eqref{logip} let $b>0$, be such such that the least singular value of $X_N$ is almost surely greater than $N^{-b}$ and let $\gamma$ be such that $\nu_{z,N}([0,I]) \leq C I$ for every $I \geq N^{-\gamma}$.

\begin{align*} \int_0^{\infty} x^{-p} d \nu_{z,N}
&= \frac{1}{N} \sum_{i=1}^N s_{i,z}^{-p}
\leq \frac{1}{N} \sum_{i=1}^{N-N^{-\gamma}} s^{-p}_{i,z} + \frac{1}{N} \sum_{i=N-N^{-\gamma}}^N s^{-p}_{i,z}  \\
& \leq C \frac{1}{N}  \sum_{i=1}^{N} (\frac{i}{N})^{-p} + N^{-\gamma} N^{b p}
\end{align*}
The first term uses the moderate singular value bounded from Lemma \ref{wegner}. This Riemann sum is finite for $0 < p < 1$. The second term is bounded using from the least singular value bound and is $o(1)$ for $p$ sufficiently small.

\section{Analysis of the limiting measure}
\label{analysis}

Let $\lambda$ be the Perron-Frobenius eigenvalue of $G$, recall this is also the spectral radius of $G$.

\begin{theorem}
\label{supp}
The support of $\mu$ is a disk with radius $\sqrt{\lambda}$.
\end{theorem}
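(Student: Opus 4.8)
The plan is to analyze the system \eqref{aeq}--\eqref{beq} in the limit $\eta = \sqrt{-1}t \to 0$ and show that the density vanishes exactly when $|z|^2 > \lambda = \rho(G)$, while it is positive for $|z|^2 < \lambda$; radial symmetry is already evident because the equations depend on $z$ only through $|z|^2$. First I would observe that for $\eta$ in the upper half-plane the vectors $a, \widehat a$ have positive (imaginary) parts and the maps $W \mapsto -(q\otimes I_D + \Sigma(W))^{-1}$ are monotone, so one gets a priori bounds: I would show $a(q)$ is bounded uniformly as $\eta\to 0$ (this is the boundedness claim already invoked in Lemma \ref{wegner}), using $\|M(\tilde q)\| \le \|\Im(\tilde q)^{-1}\|$ together with the block structure, and then extract a limit $a(z) := \lim_{t\to 0} a(z,\sqrt{-1}t)$ along subsequences, similarly for $\widehat a$.

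The key dichotomy is whether $Ga \to 0$ as $\eta\to 0$. As noted right after \eqref{beq}, if $[Ga]_c \to 0$ for all $c$ then $b \to -1/\overline z$ and $\partial_z b = 0$, so $\mu$ has no mass near such $z$; if instead $a$ converges to a nonzero positive vector, then $b_c \to -z a_c/[Ga]_c$ and the density is generically positive. So the radius of the support is characterized as the boundary between these two regimes. To locate it, set $\eta = 0$ in \eqref{aeq}: a nonzero solution $a$ must satisfy $a_c(|z|^2 - [\widehat G\widehat a]_c [Ga]_c) = [Ga]_c$, and pairing this with the analogous equation for $\widehat a$ one finds the relevant bifurcation occurs when the linearization of the fixed-point map at $a = 0$ has spectral radius $1$. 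Concretely, linearizing $a_c \approx [Ga]_c/|z|^2$ near $a=0$ gives the linear map $a \mapsto Ga/|z|^2$, whose spectral radius is $\rho(G)/|z|^2$; this is $\ge 1$ precisely when $|z|^2 \le \rho(G) = \lambda$. Thus for $|z|^2 > \lambda$ the only nonnegative solution with the correct positivity is $a = \widehat a = 0$, forcing $b = -1/\overline z$ and zero density, while for $|z|^2 < \lambda$ a nontrivial positive solution exists (obtained as the increasing limit of the iteration started from a small positive vector), giving positive mass. I would use the Perron--Frobenius theorem for $G$ (and the identity $\rho(G) = \rho(\widehat G)$ recorded before the main theorem) to make the "spectral radius $1$" threshold precise and to guarantee the nontrivial fixed point is genuinely positive in every coordinate.

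To finish, I would (i) verify that $a=\widehat a=0$ is the unique nonnegative solution of the $\eta=0$ system when $|z|^2>\lambda$ — here monotonicity of the iteration map and a comparison with its linearization do the job, since any nonzero nonnegative solution would dominate a nonzero fixed point of a contraction and hence be $0$; (ii) for $|z|^2<\lambda$, show the limiting $a$ is strictly positive and that $\sum_c \alpha_c \partial_z b_c$ does not vanish identically, so that $|z|^2 = \lambda$ is not merely where a solution branch appears but where the measure's support actually ends; (iii) conclude that $\mathrm{supp}\,\mu = \{|z|\le\sqrt\lambda\}$ together with a separate check that there is no mass exactly on the circle $|z|=\sqrt\lambda$ (a boundary case handled by the same a priori bounds, letting $|z|^2\downarrow\lambda$). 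The main obstacle I anticipate is step (i)/(ii): rigorously justifying the interchange of the limit $\eta\to 0$ with the fixed-point equations and proving the limiting vector is exactly the Perron--Frobenius-type solution — i.e., ruling out spurious limits and controlling the nonlinearity uniformly down to $\eta=0$ — since the resolvent bounds degenerate as $t\to 0$ and one must instead exploit the special algebraic structure (diagonality of $\Sigma(M)$ and positivity) rather than soft norm estimates.
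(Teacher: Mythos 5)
Your overall strategy is the right one, and your treatment of the exterior is essentially the paper's: for $|z|^2>\lambda$ the paper also compares the fixed point with the linearization at zero, iterating the map $T$ to get $[T^k(h,\widehat h)]_c\le |z|^{-2k}[G^k h]_c+Ct$ and using $\|G^k\|/|z|^{2k}\le\epsilon$ to conclude $h,\widehat h\to 0$, hence $b\to -1/\overline z$ and zero density. Note, though, that the paper runs this argument at positive $t$ for the actual solutions of \eqref{heq}, so no interchange of the limit $t\to 0$ with the fixed-point equation is ever needed; your version, which sets $\eta=0$ first and argues about uniqueness of nonnegative solutions, reintroduces exactly the interchange problem you worry about.

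The genuine gap is the interior case $|z|^2<\lambda$, and it is precisely the obstacle you flag without resolving. Exhibiting a nontrivial nonnegative fixed point of the $\eta=0$ system by a bifurcation argument does not yield the support statement: $a=\widehat a=0$ also solves the $\eta=0$ equations, and what must be proved is that the unique positive solutions $a(z,\sqrt{-1}t)$ stay bounded away from $0$ as $t\to 0$, i.e.\ that the limit is the nontrivial branch and not the trivial one. Your proposed a priori bound $\|M(\tilde q)\|\le\|\Im(\tilde q)^{-1}\|$ cannot help here, since it diverges as $t\to 0$; the paper instead gets uniform boundedness of $h=a/\sqrt{-1}$ and $\widehat h$ from the identity $\sum_c\alpha_c h_c=\sum_c\alpha_c\widehat h_c$, and then supplies the missing idea: comparing $h$ with the Perron--Frobenius eigenvector $v$ of $G$ at the index $c$ minimizing $h_c/v_c$, one writes $[Gh]_c=\lambda h_c+(\text{nonnegative correction})$, which turns \eqref{aeq} at that index into the scalar iid-type equation \eqref{wlambda}; a two-case analysis in the size of the shifted parameter $t_1$ then gives a lower bound on $h_c$ uniform in $t$, hence $|a|$ is bounded away from zero throughout the disk. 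Finally, your monotone-iteration step is shakier than stated: the map $(h,\widehat h)\mapsto T(h,\widehat h)$ is increasing in $h$ but decreasing in $\widehat h$, so it is not order-preserving for the standard cone ordering, and the claimed ``increasing limit from a small positive vector'' would need a separate argument (e.g.\ working with the second iterate), which your proposal does not supply.
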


Before starting the proof, we begin by recalling \eqref{aeq} in the homogeneous case. 
Let $a_{iid}$ be the solution to  \eqref{aeq} in the case that there is just one block of size $N$ with variance $\sigma^2$, and $h_{iid} =  a_{iid}/\sqrt{-1}$  then
\[ h_{iid} = \frac{  \sigma^2 h_{iid} + t}{ |z|^2 + ( \sigma^2 h_{iid} + t)^2 } \]
which has solution as $t \to 0$
\[ \lim_{t \to 0} h_{iid} = \begin{cases} \frac{1}{\sigma^2} \sqrt{\sigma^2 - |z|^2 } & \text{ if } |z|^2 \leq \sigma^2 \\ 0 & \text{ otherwise}  \end{cases} \]

In our proof, we manipulate the equations defining $a$ to resemble the iid case, from which the spectral radius can be extracted.

\begin{proof}

The equations defining $a$ and $\widehat a$, \eqref{aeq} and \eqref{waeq}, only depend on $|z|$. As noted in Section 3, $\mu$ can be recovered $a$ and hence $\mu$ is a radially symmetric measure.

Fix $|z|^2 \leq \lambda$ we will now show there exist a $C_z > 0$ such that $\lim_{t \to 0} |a(\sqrt{-1}t,z)| > C_z$. In what follows many of the variables depend on $\eta$ and $z$ but this dependence will often be suppressed.

Let $h_c := a_c/ \sqrt{-1} $. Since $a_c$ is purely imaginary with non-negative imaginary part, $h_c \geq 0$.
Let $h$ be the vector with $c^{th}$ entry $h_c$ and define $\widehat{h}$ similarly. Multiplying \eqref{aeq} by $\sqrt{-1}$ leads to the equations:
\begin{equation}
\label{heq}
 h_c = \frac{ [G h]_c + t}{|z|^2 + ( [\widehat  G \widehat h]_c + t) ([Gh]_c + t) } ,
\quad
 \widehat h_c = \frac{[\widehat G \widehat h]_c + t}{|z|^2 + ( [\widehat G \widehat h]_c + t) ([Gh]_c + t)  }
 \end{equation}
It will be more convenient to work with these equations, as all the quantities are real.

Note that for each $c$, $h_c$ and $\widehat h_c$ are bounded uniformly for $t >0$. Indeed, if there was an index $c$ such that $h_c \to \infty$ then the right side of the first equation in \eqref{heq} is also unbounded, implying that $[\widehat G \widehat h]_c \to 0$. Since the entries of $\widehat G$ are positive this also implies $ \widehat h_d \to 0$ for each $d$, contradicting the fact that $ \sum_{c=1}^D \alpha_c h_{c}(\sqrt{-1}t,z) =\sum_{c=1}^D \alpha_c \widehat h_{c}(\sqrt{-1}t,z) $. 

Since $|a(\sqrt{-1}t,z)| =  \sum_{c=1}^D \alpha_c h_{c}(\sqrt{-1}t,z) =\sum_{c=1}^D \alpha_c \widehat h_{c}(\sqrt{-1}t,z) $ it suffices to show for each $t$ small, at least one of the $h_c$ is bounded from below.

Let $v$ be a Perron-Froebinius eigenvector of $G$, we shall shortly see the choice of  normalization is not important. For each $c$, let $v^c = \frac{h_c}{v_c} v$ be the Perron-Froebinius eigenvector of $G$, normalized such that $v_c^c = h_c$. 

Then we rewrite $ [Gh]_c$ as:

\begin{align*}
[Gh]_c &= \sum_{d} G_{cd}( h_d + v^{c}_d - v^{c}_d) \\
&=\lambda v^c_c + \sum_{d \not=c} G_{cd} (h_{d} - v_d^c)  \\
&=  \lambda h_c + \sum_{d \not=c} G_{cd} (v_{d}^d - v_d^c)  
\end{align*}


For each $t$, let $c$ be an index such that $\frac{h_c}{v_c}$ is the smallest. Implying that $v_d^c \leq v_d^d$ for all $d$ or $(v_{d}^d - v_d^c)\geq 0$ for all $d$.

Using this $c$, let $t_1 = t + \sum_{d \not=c} G_{cd} (v_{d}^d - v_d^c)$.

\begin{equation}
\label{wlambda}
 h_c = \frac{ \lambda h_c + t_1 }{ |z|^2  + ([\widehat  G \widehat h]_c + t) (\lambda h_c + t_1 )  }
\end{equation}

or
\[ ([\widehat  G \widehat h]_c + t)(\lambda h_c + t_1 ) =   (\lambda - |z|^2) + t_1 h_c^{-1} .\]

Let $ K = \max\{\max_{d} h_d,\sup_{d,0<t<1} ([\widehat  G \widehat h]_d + t)$\}.

To show that $h_c$ is bounded from below we consider two cases. First, assume that $t_1 \leq \frac{\lambda - |z|^2}{2 K}$. Rearranging the above equation gives
\[  ([\widehat  G \widehat h]_c + t) \lambda h_c \geq  (\lambda - |z|^2) - ([\widehat  G \widehat h]_c + t) t_1 \]
or 
\[ h_c \geq \frac{\lambda - |z|^2}{2 K \lambda}. \]

Alternatively, assume $t_1 > \frac{\lambda - |z|^2}{2 K}$ then 
\[  h_c^{-1}  = \left| \frac{  ([\widehat  G \widehat h]_c + t)(\lambda h_c + t_1 ) - (\lambda -|z|^2 ) }{t_1} \right| \leq \frac{2K^3 \lambda}{\lambda-|z|^2}+ K+ 2 K  . \] 
We conclude that $|a(\sqrt{-1}t,z)|$ is uniformly bounded away from $0$ for all positive $t$.\\




To conclude the proof of the lemma, we now fix $|z|^2 > \lambda$ and consider the vector $(h, \widehat h)$ formed by concatenating the vectors $h$ and $\widehat h$. We will show for any $1>\epsilon>0$ there exist a $C$ such that $\|(h,\widehat h)\| < \epsilon  \|(h,\widehat h)\| + Ct  $. Implying $h_c, \widehat h_c \to 0 $ as $t \to 0$ for each $1\leq c\leq D$. 

Let $T$ be the function from $\R^{2D} \to \R^{2D}$ defined by
\begin{equation*}
\label{Th}
  [T(h,\widehat h) ]_c=  \frac{ \frac{1}{|z|^2} ( [G h]_c + t) }{1 + ( \frac{1}{|z|^2} ( [\widehat G \widehat h]_c + t) )( [Gh]_c + t) )      }
  \end{equation*}
for $1 \leq c \leq D$ and
\[  [T(h,\widehat h) ]_c =  \frac{ \frac{1}{|z|^2} ( [\widehat G  \widehat h]_c + t) }{1 +  ( \frac{1}{|z|^2} ( [\widehat G \widehat h]_c + t) )( [Gh]_c + t) )      } \]
for $d+1 \leq c \leq 2D$.

If $(h,\widehat h)$ is a solution to \eqref{heq} then $T((h,\widehat h))=(h,\widehat h)$ and therefore $T^k((h,\widehat h)) = (h,\widehat h)$.
Since $|z|^2 > \rho(G)$, for any $\epsilon>0$ there exist a $k$ such that $\| G^k \|/|z|^{2k} \leq \epsilon$.

Then using that the denominator is always greater than 1, we bound $T^k(h,\widehat h)$ in terms of $T^{k-1}(h,\widehat h)$ by
\[ [T^{k}(h,\widehat h)]_c \leq \frac{1}{|z|^2}  [G  T^{k-1}(h,\widehat h) ]_c + t   \]

Iterating this estimate leads to
\[ [T^{k}(h,\widehat h)]_c \leq   \left(\frac{1}{|z|^2} \right)^k [G^k h]_c +  \left[\frac{1}{|z|^2}(I + \frac{G}{|z|^2} + \ldots \frac{G^k}{|z|^{2k}} ) \tilde{t}  \right]_c \leq \epsilon \|(h,\widehat h)\| + Ct    \]
where $ \tilde{t}$ is vector with all entries equal to $t$. 
\end{proof}

\subsection*{Acknowledgments}
The authors would like to thank Nick Cook for useful discussions.  The authors would also like to thank the anonymous referee for valuable comments and careful reading of the manuscript.

\bibliographystyle{abbrv}


\begin{thebibliography}{1}


\bibitem{A}
G. Anderson.
\newblock Convergence of the largest singular value of a polynomial in independent
Wigner matrices.
\newblock {\em Annals of Probability}, 41(3B):2103-2181, 2013

\bibitem{Ahmadian2013}
Y.~Ahmadian, F.~Fumarola, and K.~D. Miller.
\newblock Properties of networks with partially structured and partially random
  connectivity.
\newblock {\em Physical Review E}, 91, 012820, 2015.

\bibitem{Aljadeff2014}
J.~Aljadeff, M.~Stern, and T.~Sharpee.
\newblock Transition to chaos in random networks with cell-type-specific
  connectivity.
\newblock {\em Physical Review Letters}, 114, 088101, 2015.

\bibitem{B}
Z.D.~Bai.
\newblock Circular law.
\newblock {\em Annals of Probability}, 25(1):494-529, 1997


\bibitem{BS}
Z.D.~Bai, J. Silverstein.
\newblock Spectral Analysis of Large Dimensional Random Matrices.
\newblock {\em Mathematics Monograph Series 2}, Science Press, Beijing 2006.

\bibitem{BC}
C.~Bordenave and D.~Chafa\"i.
\newblock Around the circular law.
\newblock {\em  Probability Surveys}, 9:1-89, 2012

\bibitem{BYY}
P.~Bourgade,H-T~Yau, J.~Yin.
\newblock Local circular law for random matrices.
\newblock {\em  Probability Theory and Related Fields}, 9:1-89, 2012

\bibitem{F}
F.~Juhász. 
\newblock On the structural eigenvalues of block random matrices.
\newblock {\em Linear Algebra and its Applications}, Volume 246, October 1996, Pages 225-231



\bibitem{FOBS}
R. R. Far , T. Oraby , W. Bryc and R. Speicher.
\newblock On slow-fading MIMO systems with nonseparable correlation.
\newblock {\em   IEEE Trans. Inf. Theory}  vol. 54,  no. 2,  pp.544 -553 2008 

\bibitem{G}
V.L.~Girko
\newblock Circular law
\newblock {\em Theory of Probability and Its Applications}, 159(3-4),545--595, 2013

\bibitem{HFS}
J. Helton, R. Far, R. Speicher.
\newblock Operator-valued semicircular elements: solving a quadratic
matrix equation with positivity constraints
\newblock {\em International Mathematics Research Notices}, 2007

\bibitem{HT}
U.~Haagerup, S.~Thorbj{\o}rnsen
\newblock A new application of random matrices: Ext($C^*_{red}(F_2)$) is not a group.
\newblock {\em Annals of Mathematics}, 162(2):711--775, 2005


\bibitem{NO}
{\sc Nguyen, H., and O'Rourke, S.}
\newblock On the concentration of random multilinear forms and the universality
  of random block matrices.
\newblock {\em Probability Theory and Related Fields\/} (2014), 1--58.


\bibitem{OR}
S.~O'Rourke and D.~Renfrew
\newblock Low rank perturbations of large elliptic random matrices.
\newblock {\em Electronic Journal of Probability}, 19(43):1--65, 2014

\bibitem{ORSV}
S.~O'Rourke, D.~Renfrew, A.~Soshnikov, V.~Vu
\newblock Products of independent elliptic random matrices.
\newblock {\em arXiv preprint arXiv:1403.6080}, 2014.

\bibitem{Rajan2006}
K.~Rajan and L.~F. Abbott.
\newblock Eigenvalue spectra of random matrices for neural networks.
\newblock {\em Physical review letters}, 97, 188104, 2006.

\bibitem{Rudelson2013}
M.~Rudelson and R.~Vershynin.
\newblock Delocalization of eigenvectors of random matrices with independent
  entries.
\newblock {\em arXiv preprint arXiv:1306.2887}, 2013.

\bibitem{Sompolinsky1988}
H.~Sompolinsky, A.~Crisanti, and H.~J. Sommers.
\newblock Chaos in random neural networks.
\newblock {\em Physical Review Letters}, 61, 259, 1988.

\bibitem{Stephanov1996}
M.~A. Stephanov.
\newblock Random matrix model of QCD at finite density and the nature of the
  quenched limit.
\newblock {\em Physical Review Letters}, 76, 4472, 1996.

\bibitem{T}
T.~Tao
\newblock Topics in random matrix theory
\newblock {\em Graduate Studies in Mathematics} 132, 2012.

\bibitem{TV}
T.~Tao and V.~Vu.
\newblock Random matrices: The circular law
\newblock {\em Communications in Contemporary Mathematics}, 10(2):261-307, 2008.

\bibitem{Wei2012}
Y.~Wei.
\newblock Eigenvalue spectra of asymmetric random matrices for multicomponent
  neural networks.
\newblock {\em Physical Review E}, 85, 066116, 2012.
\end{thebibliography}

\end{document}